\def\frk{\mathfrak}               
\def\Phi{{\frk N}}
\def\opn#1#2{\def#1{\operatorname{#2}}} 
\opn\chara{char} 
\opn\length{\ell} 
\opn\pd{pd} 
\opn\rk{rk}
\opn\projdim{proj\,dim} 
\opn\injdim{inj\,dim} 
\opn\rank{rank}
\opn\depth{depth} 
\opn\grade{grade} 
\opn\height{height}
\opn\embdim{emb\,dim} 
\opn\codim{codim}
\opn\Tr{Tr} 
\opn\bigrank{big\,rank}
\opn\superheight{superheight}
\opn\lcm{lcm}
\opn\trdeg{tr\,deg}
\opn\reg{reg} 
\opn\lreg{lreg} 
\opn\ini{in} 
\opn\lpd{lpd}
\opn\size{size}
\opn\mult{mult}
\opn\dist{dist}
\opn\cone{cone}
\opn\lex{lex}
\opn\rev{rev}
\opn\im{im}
\opn\m{m}
\opn\div{div} \opn\Div{Div} \opn\cl{cl} \opn\Cl{Cl}
\opn\Spec{Spec} \opn\Supp{Supp} \opn\supp{supp} \opn\Sing{Sing}
\opn\Ass{Ass} \opn\Min{Min}
\opn\Ann{Ann} \opn\Rad{Rad} \opn\Soc{Soc}
\opn\Syz{Syz} \opn\Im{Im} \opn\Ker{Ker} \opn\Coker{Coker}
\opn\Am{Am} \opn\Hom{Hom} \opn\Tor{Tor} \opn\Ext{Ext}
\opn\End{End} \opn\Aut{Aut} \opn\id{id} \opn\ini{in}
\opn\nat{nat}
\opn\pff{pf}
\opn\Pf{Pf} \opn\GL{GL} \opn\SL{SL} \opn\mod{mod} \opn\ord{ord}
\opn\Gin{Gin}
\opn\Hilb{Hilb}\opn\adeg{adeg}\opn\std{std}\opn\ip{infpt}
\opn\Pol{Pol}
\opn\sat{sat}
\opn\Var{Var}
\opn\Gen{Gen}
\opn\aff{aff} \opn\con{conv} \opn\relint{relint} \opn\st{st}
\opn\lk{lk} \opn\cn{cn} \opn\core{core} \opn\vol{vol}
\opn\link{link} \opn\star{star}
\opn\gr{gr}
\def\pot#1#2{#1[\kern-0.28ex[#2]\kern-0.28ex]}
\opn\dirlim{\underrightarrow{\lim}}
\opn\inivlim{\underleftarrow{\lim}}
\def\Implies{\ifmmode\Longrightarrow \else
        \unskip${}\Longrightarrow{}$\ignorespaces\fi}
\def\implies{\ifmmode\Rightarrow \else
        \unskip${}\Rightarrow{}$\ignorespaces\fi}
\def\iff{\ifmmode\Longleftrightarrow \else
        \unskip${}\Longleftrightarrow{}$\ignorespaces\fi}
\newtheorem{Theorem}{Theorem}[section]
\newtheorem{Lemma}[Theorem]{Lemma}
\newtheorem{Remark}[Theorem]{Remark}
\newtheorem{Example}[Theorem]{Example}
\let\epsilon\varepsilon
\let\phi=\varphi
\let\kappa=\varkappa
\def\qed{\ifhmode\textqed\fi
      \ifmmode\ifinner\quad\qedsymbol\else\dispqed\fi\fi}
\def\textqed{\unskip\nobreak\penalty50
       \hskip2em\hbox{}\nobreak\hfil\qedsymbol
       \parfillskip=0pt \finalhyphendemerits=0}
\def\dispqed{\rlap{\qquad\qedsymbol}}
\opn\dis{dis}
\opn\height{height}
\opn\dist{dist}
\def\pnt{{\raise0.5mm\hbox{\large\bf.}}}
\opn\Lex{Lex}
\begin{document}

\title{Matching numbers and dimension of edge ideals}
\author{Ayana Hirano and Kazunori Matsuda}

\address{Ayana Hirano, 
Kitami Institute of Technology, 
Kitami, Hokkaido 090-8507, Japan}
\email{f1510801010@mail.kitami-it.ac.jp}

\address{Kazunori Matsuda,
Kitami Institute of Technology, 
Kitami, Hokkaido 090-8507, Japan}
\email{kaz-matsuda@mail.kitami-it.ac.jp}
\subjclass[2010]{05C69, 05C70, 05E40, 13C15}
\keywords{edge ideal, induced matching number, minimum matching number, matching number, independent set.}
\begin{abstract}
Let $G$ be a finite simple graph on the vertex set $V(G) = \{x_{1}, \ldots, x_{n}\}$ and match$(G)$, min-match$(G)$ and ind-match$(G)$ the matching number, minimum matching number and induced matching number of $G$, respectively. 
Let $K[V(G)] = K[x_{1}, \ldots, x_{n}]$ denote the polynomial ring over a field $K$ and $I(G) \subset K[V(G)]$ the edge ideal of $G$. 
The relationship between these graph-theoretic invariants and ring-theoretic invariants of the quotient ring $K[V(G)]/I(G)$ 
has been studied. 
In the present paper, we study the relationship between match$(G)$, min-match$(G)$, ind-match$(G)$ 
and $\dim K[V(G)]/I(G)$. 

\end{abstract}
\maketitle
\section*{Introduction}
    

Let $G = (V(G), E(G))$ be a finite simple graph (i.e. a finite graph with no loop and no multiple edge) 
on the vertex set $V(G) = \{x_{1}, \ldots, x_{n}\}$ with the edge set $E(G)$.   
%
A subset $M = \{e_{1}, \ldots, e_{s} \} \subset E(G)$ is said to be a {\em matching} of $G$ if, for all $e_{i}$ and $e_{j}$ with $i \neq j$ belonging to $M$, one has $e_i \cap e_j = \emptyset$. 
A matching $M$ of $G$ is {\em maximal} if $M \cup \{e\}$ cannot be a matching of $G$ for all $e \in E(G) \setminus M$.    
A matching $M = \{e_{1}, \ldots, e_{s} \} \subset E(G)$ is said to be an {\em induced matching} of $G$ if for all $e_{i}$ and $e_{j}$ with $i \neq j$ belonging to $M$, there is no edge $e \in E(G)$ with $e \cap e_{i} \neq \emptyset$ and $e \cap e_{j} \neq \emptyset$. 
The {\em matching number} match$(G)$, the {\em minimum matching number} min-match$(G)$ 
and the {\em induced matching number} ind-match$(G)$ of $G$ are defined as follows respectively: 
\begin{eqnarray*}
\text{match}(G)&=&\max\{|M| : M \text{\ is a matching of\ } G\}; \\
\text{min-match}(G)&=&\min\{|M| : M \text{\ is a maximal matching of\ } G\}; \\
\text{ind-match}(G)&=&\max\{|M| : M \text{\ is an induced matching of\ } G\}.  
\end{eqnarray*}
It is known that ind-match$(G)$ $\leq$ min-match$(G)$ $\leq$ match$(G)$ $\leq$ 2min-match$(G)$ holds 
for all finite simple graph $G$ (\cite[Proposition 2.1]{HHKT}). 
It is also known that there exists a finite connected simple graph $G_{a, b, c}$ 
with ind-match$(G_{a, b, c})$ $= a$, min-match$(G_{a, b, c})$ $= b$ and match$(G_{a, b, c}) = c$ 
for all positive integers $a, b, c$ satisfying $1 \leq a \leq b \leq c \leq 2b$  (\cite[Theorem 2.3]{HHKT}). 
A classification of finite connected simple graphs $G$ with ind-match$(G)$ $=$ min-match$(G)$ $=$ match$(G)$ 
is given (\cite[Theorem 1]{CW}, \cite[Remark 0.1]{HHKO}). 
Such graphs are studied in \cite{HHKO, HKMT, T} from a viewpoint of commutative algebra. 
In addition, finite simple graphs $G$ with ind-match$(G)$ $=$ min-match$(G)$ are also studied in \cite{HHKT}.  

Let $G = (V(G), E(G))$ be a finite simple graph on the vertex set $V(G) = \{x_{1}, \ldots, x_{n}\}$ with the edge set $E(G)$.   
Let $K[V(G)] = K[x_{1}, \ldots, x_{n}]$ be the polynomial ring in $n = |V(G)|$ variables over a field $K$ with each $\deg x_{i} = 1$.  
The {\em edge ideal} of $G$ is the ideal 
\[
I(G) = \left( x_{i}x_{j} : \{x_{i}, x_{j}\} \in E(G) \right) \subset K[V(G)]. 
\] 
Edge ideals of finite simple graphs have been studied by many researcher, see \cite{HV, MV, V} and their references. 
Let ${\rm reg}(G) = {\rm reg}\left(K[V(G)]/I(G)\right)$, $\deg h(G) = \deg h\left(K[V(G)]/I(G)\right)$ and $\dim (G) = \dim K[V(G)]/I(G)$ 
denote the regularity, the degree of the $h$-polynomial (see \cite[p.312]{HKM}) and the dimension 
of the quotient ring $K[V(G)]/I(G)$, respectively. 
In general, $\deg h(G) \leq \dim (G)$ holds. 
Moreover, ${\rm reg}(G) = \deg h(G)$ holds if $K[V(G)]/I(G)$ is Cohen-Macaulay (\cite[Corollary B.4.1]{V}). 
It is known that there exists a finite connected simple graph $G_{r, s}$ with ${\rm reg}(G_{r, s}) = r$ and $\deg h(G_{r, s}) = s$ 
for all integers $r, s \geq 1$ (\cite[Theorem 3.1]{HMVT}).  

Recently, the relationship between graph-theoretic invariants match$(G)$, min-match$(G)$, ind-match$(G)$ 
and ring-theoretic invariants of the quotient ring $K[V(G)]/I(G)$ has been studied. 
As previous results, 
\begin{itemize}
	\item \cite{K, W} ind-match$(G) \leq {\rm reg} (G) \leq$ min-match$(G) \leq$ match$(G)$;  
	\item \cite[Theorem 11]{T} there is no finite connected simple graph $G$ with ind-match$(G) = 1$ and 
	${\rm reg} (G) =$ min-match$(G) =$ match$(G) = r$ for all $r \geq 3$; 
	\item \cite[Theorem 0.1]{HKM} there exists a finite connected simple graph $G_{a, r, s}$ satisfying  
	ind-match $(G_{a, r, s}) = a$, ${\rm reg}(G_{a, r, s}) = r$ and $\deg h(G_{a, r, s}) = s$ for all integers $a, r, s$ 
	with $1 \leq a \leq r$ and $s \geq 1$.  
\end{itemize}

In the present paper, we study the relationship between match$(G)$, min-match$(G)$, ind-match$(G)$ 
and $\dim K[V(G)]/I(G)$.
We will prove that 

\begin{Theorem}\label{main}
Let $a, b, c, d$ be positive integers. 
Then the following assertions are equivalent:
\begin{enumerate}
	\item[$(1)$] there exists a finite connected simple graph $G_{a, b, c, d}$ with ${\rm ind}$-${\rm match}(G_{a, b, c, d}) = a$, 
	${\rm min}$-${\rm match}(G_{a, b, c, d}) = b$, ${\rm match}(G_{a, b, c, d}) = c$ and $\dim (G_{a, b, c, d}) = d$; 
	\item[$(2)$] $1 \leq a \leq b \leq c \leq 2b$ and $d \geq \max\{a, 2(c - b)\}$.  
\end{enumerate}
\end{Theorem}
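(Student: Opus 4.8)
The plan is to first replace the ring-theoretic invariant by a purely combinatorial one. Since $I(G)$ is the Stanley--Reisner ideal of the independence complex of $G$, the quotient $K[V(G)]/I(G)$ is the Stanley--Reisner ring of that complex, and therefore $\dim(G) = \dim K[V(G)]/I(G) = \alpha(G)$, where $\alpha(G)$ is the independence number (the maximum size of an independent set of $G$). Thus Theorem \ref{main} becomes a statement about match$(G)$, min-match$(G)$, ind-match$(G)$ and $\alpha(G)$, and I would prove the two implications separately.

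For $(1)\Rightarrow(2)$, the chain $1\le a\le b\le c\le 2b$ is exactly \cite[Proposition 2.1]{HHKT}, so only $d\ge\max\{a,2(c-b)\}$ needs argument, i.e. $\alpha(G)\ge a$ and $\alpha(G)\ge 2(c-b)$. For the first, pick one endpoint from each edge of a maximum induced matching $\{e_1,\dots,e_a\}$; if two chosen endpoints were adjacent, that edge would join some $e_i$ to some $e_j$, contradicting inducedness, so the $a$ chosen vertices are independent. For the second, fix a maximum matching $M$ with $|M|=c$ and a minimum maximal matching $M'$ with $|M'|=b$, and examine $M\triangle M'$: it is a disjoint union of alternating paths and even cycles, and since $M$ is maximum there are no $M$-augmenting paths, so the unbalanced components are precisely $M'$-augmenting paths, exactly $c-b$ of them. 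Each such path has length $\ge 3$ (a single edge joining two $M'$-unsaturated vertices would contradict maximality of $M'$), hence two distinct endpoints, both unsaturated by $M'$. By maximality of $M'$ the set of all $M'$-unsaturated vertices is independent, so these $2(c-b)$ endpoints form an independent set, giving $\alpha(G)\ge 2(c-b)$.

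The implication $(2)\Rightarrow(1)$ is the construction, which I would organise around one reusable move. \emph{Padding:} if $H$ is a connected graph and $v$ carries a pendant leaf $v_0$, then attaching $k$ further leaves $z_1,\dots,z_k$ at $v$ leaves match$(H)$, min-match$(H)$ and ind-match$(H)$ unchanged while raising $\alpha$ by exactly $k$. Indeed the new leaves all compete for the single vertex $v$, so at most one is ever matched and can be swapped for $v_0$; hence no maximum matching needs them, and the leaf at $v$ forces every maximal matching to cover $v$ both before and after padding, so match and min-match are unchanged; the $z_i$ are interchangeable with $v_0$ as far as induced matchings go, so ind-match is unchanged; and since $v$ has a leaf there is a maximum independent set avoiding $v$, to which all $k$ new leaves may be adjoined, giving $\alpha(H_k)=\alpha(H)+k$. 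It therefore suffices to produce, for each admissible triple $(a,b,c)$, a connected base graph with the prescribed three matching numbers, a pendant leaf, and independence number $\max\{a,2(c-b)\}$, and then attach $d-\max\{a,2(c-b)\}\ge 0$ leaves to obtain $G_{a,b,c,d}$.

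Building the base is where I expect the real work to lie. The natural atoms are single edges (induced-matching units that carry their own leaf), paths $P_4$ (each contributing $1$ to ind-match, $1$ to min-match, $2$ to match, hence $+1$ to $c-b$ and $+2$ to $\alpha$, matching the bound tightly), and coronas $K_m\circ K_1$ of complete graphs (with ind-match $1$, min-match $\lceil m/2\rceil$, match $m$, $\alpha=m$, and plentiful leaves). Because the induced matching number is additive over connected components, a base with ind-match $a$ must be assembled from $a$ pieces and then joined into one connected graph. The two genuine obstacles are: (i) performing this join without creating a larger induced matching or disturbing min-match and match --- one wants to glue at vertices saturated by every maximum induced matching, so that no new induced-independent edge appears; and (ii) pinning $\alpha$ exactly to $\max\{a,2(c-b)\}$ in both regimes $a\ge 2(c-b)$ and $a<2(c-b)$, since the extremal (smallest-$\alpha$) realisations tend to be dense and leaf-free --- for instance $K_{2,2,2}$ realises $(a,b,c)=(1,2,3)$ with $\alpha=2$ but has no leaf. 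I would accordingly handle the tight case $d=\max\{a,2(c-b)\}$ by a direct, possibly leaf-free, construction, and reach every larger $d$ by padding a leaf-bearing base; reconciling these while keeping all four invariants exact is the crux of the argument.
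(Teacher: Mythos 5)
Your direction $(1)\Rightarrow(2)$ is correct and essentially matches the paper: $\dim(G)$ is the independence number (the paper's Lemma \ref{dim(G)}), $\alpha(G)\geq a$ follows by choosing one endpoint from each edge of a maximum induced matching, and your symmetric-difference argument for $\alpha(G)\geq 2(c-b)$ is valid, though the paper gets this more simply: the vertices not covered by a minimum maximal matching already form an independent set, of size $|V(G)|-2b\geq 2c-2b$ since a matching of size $c$ forces $|V(G)|\geq 2c$ (Lemma \ref{inequalities}). Your padding lemma (attaching extra leaves at a vertex that already carries a leaf preserves all three matching numbers and raises $\alpha$ by exactly the number of new leaves) is also correct, and corresponds to the paper's Lemma \ref{match}(1); the paper uses precisely this device in its Cases 1, 3, 5, 6, 7.

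The genuine gap is that the implication $(2)\Rightarrow(1)$ --- the constructive content of the theorem --- is never actually proved. You reduce it to producing, for each admissible triple $(a,b,c)$, a connected base graph with the prescribed matching numbers and with $\alpha$ pinned exactly to $\max\{a,2(c-b)\}$ (tight case), together with a leaf-bearing base for the larger values of $d$; but you exhibit neither, and you yourself flag that ``reconciling these while keeping all four invariants exact is the crux of the argument.'' The obstacles you name are real, and they are exactly where the paper does its work, by a different device than gluing edge/path/corona atoms: to realize ${\rm ind}$-${\rm match}=a>1$ with $b=c$ it takes the $\emptyset$-suspension (an apex joined to every vertex) of $(a-1)K_{2}\sqcup K_{2(b-a+1)}$, which achieves connectivity without creating new induced matchings and with $\dim=a$ (Case 4); and when $b<c$ it adjoins vertices $x_{1},\dots,x_{2(c-b)}$ joined to all of the clique $V_{2b}$ (Cases 6, 7), or to one half of it each (Case 2), which simultaneously raises ${\rm match}$ from $b$ to $c$, keeps ${\rm min}$-${\rm match}=b$ (forced by the clique), and keeps ${\rm ind}$-${\rm match}=a$; the dimension is then pinned to exactly $d$ in both regimes $2(c-b)\geq a$ and $2(c-b)<a$ by viewing these vertices as an iterated sequence of $S$-suspensions and invoking Lemma \ref{S-suspension}, with the upper bound ${\rm match}\leq c$ coming from the vertex count $2c+1$ and Lemma \ref{match}(2). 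Without these (or equivalent) explicit constructions and the verification of all four invariants on them, your proposal is a plan for a proof, not a proof.
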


\section{Preparation for Theorem \ref{main}}

In order to prove Theorem \ref{main}, we will prepare several lemmata in this section. 

Let $G = (V(G), E(G))$ be a finite simple graph on the vertex set $V(G) = \{x_{1}, \ldots, x_{n}\}$ with the edge set $E(G)$.  
Given any subset $W \subset V(G)$, the {\em induced subgraph} of $G$ on $W$, denoted by $G_{W}$, 
is the graph on $V(G_{W}) = W$ with $E(G_{W}) = \left\{ \{x_{i}, x_{j}\} : i, j \in W \right\}$. 
A subset $S \subset V(G)$ is an {\em independent set} of $G$ if $\{x_{i}, x_{j}\} \not\in E(G)$ for all $x_{i}, x_{j} \in S$. 
Note that the empty set $\emptyset$ is an independent set of $G$. 
It is known that 

\begin{Lemma}\label{dim(G)}
$\dim (G) = \max\{ |S| : S \text{\ is\ an\ independent\ set\ of\ } G \}$. 
\end{Lemma}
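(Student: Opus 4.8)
The plan is to identify $I(G)$ as a squarefree monomial ideal, determine its minimal primes, and then convert the resulting height into the independence number via the elementary duality between independent sets and vertex covers. Recall that a subset $C \subset V(G)$ is a \emph{vertex cover} of $G$ if every edge of $G$ has at least one endpoint in $C$; by definition, $S \subset V(G)$ is independent if and only if its complement $V(G) \setminus S$ is a vertex cover. Hence $\max\{ |S| : S \text{ independent}\} = n - \min\{ |C| : C \text{ a vertex cover}\}$, so it suffices to prove that $\height I(G)$ equals the minimum cardinality of a vertex cover of $G$.

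To compute $\height I(G)$, first I would determine its minimal primes. For any $W \subset V(G)$ write $P_{W} = (x_{i} : x_{i} \in W)$; this is a monomial prime ideal of $K[V(G)]$ of height $|W|$. Since the generator $x_{i}x_{j}$ of $I(G)$ lies in $P_{W}$ precisely when $x_{i} \in W$ or $x_{j} \in W$, one sees that $I(G) \subset P_{W}$ if and only if $W$ is a vertex cover of $G$. Because $I(G)$ is a squarefree (hence radical) monomial ideal, it is the intersection of the monomial primes minimal over it, and these are exactly the $P_{C}$ with $C$ a minimal vertex cover. Thus $\Min(I(G)) = \{ P_{C} : C \text{ a minimal vertex cover}\}$.

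Combining these, $\height I(G) = \min\{ |C| : C \text{ a vertex cover of } G\}$, and therefore $\dim K[V(G)]/I(G) = n - \height I(G) = n - \min\{ |C| : C \text{ a vertex cover}\} = \max\{ |S| : S \text{ an independent set of } G\}$, using the standard identity $\dim R/I = n - \height I$ for a homogeneous ideal $I$ in the $n$-variable polynomial ring $R = K[V(G)]$.

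The steps here are essentially formal once the prime decomposition is in place; the only point requiring care, and the step I would regard as the crux, is verifying cleanly that every minimal prime of a squarefree monomial ideal is of the form $P_{W}$ and arises from a (minimal) vertex cover. An alternative route avoiding this bookkeeping is to view $I(G)$ as the Stanley--Reisner ideal of the independence complex of $G$, whose faces are exactly the independent sets, and to invoke the formula $\dim K[\Delta] = \dim \Delta + 1 = \max\{ |F| : F \in \Delta\}$ for the dimension of a Stanley--Reisner ring; this yields the claim immediately.
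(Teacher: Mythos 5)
Your proof is correct; the only thing to note is that the paper gives no proof at all for this lemma (it is stated with ``It is known that''), so what you have written is a complete version of the standard argument the paper implicitly invokes. Both of your routes are sound: the equivalence $I(G) \subset P_{W}$ if and only if $W$ is a vertex cover, combined with the standard fact that every minimal prime of a monomial ideal is a face ideal $P_{W}$, pins down $\Min(I(G))$; the identity $\dim K[V(G)]/I(G) = n - \operatorname{height} I(G)$ is valid for any ideal of a polynomial ring over a field (homogeneity is not needed, since the polynomial ring is Cohen--Macaulay and catenary); and the complementation bijection between independent sets and vertex covers finishes the computation. The Stanley--Reisner alternative you sketch is the same argument in different packaging: $I(G)$ is the Stanley--Reisner ideal of the independence complex, whose faces are exactly the independent sets, so $\dim K[\Delta] = \max\{|F| : F \in \Delta\}$ gives the claim at once. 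One cosmetic simplification: radicality of $I(G)$ is not actually needed for the height computation; once you know each minimal prime is some $P_{W}$ and that $I(G) \subset P_{W}$ exactly when $W$ covers every edge, you already get $\operatorname{height} I(G) = \min\{ |C| : C \text{ a vertex cover of } G \}$, without appealing to the intersection decomposition.
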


Let us recall the definition of $S$-suspension (see \cite[p.313]{HKM}) of $G$. 
Let $S \subset V(G)$ be an independent set of $G$. 
Note that $0 \leq |S| \leq \dim (G)$ by Lemma \ref{dim(G)}.   
The graph $G^{S}$ is defined by
\begin{itemize}
	\item $V(G^{S}) = V(G) \cup \{ x_{n + 1} \}$, where $x_{n + 1}$ is a new vertex. 
	\item $E(G^{S}) = E(G) \cup \left\{ \{ x_{i}, x_{n + 1} \} : x_{i} \not\in S \right\}$. 
\end{itemize}
We call $G^{S}$ the $S$-{\em suspension} of $G$. 
Note that $G^{\emptyset}$ coincides with the suspension \cite[p.141]{HNOS} of $G$ in usual sense. 
The following lemma mentions the induced matching number and the dimension of 
the $S$-suspension of a graph. 

\begin{Lemma}[{\cite[Lemma 1.5]{HKM}}]
\label{S-suspension}
Let $G$ be a finite simple graph on the vertex set $V(G) = \{x_{1}, \ldots, x_{n}\}$
such that $G$ has no isolated vertices. 
Let $S \subset V(G)$ be an independent set of $G$. 
Then one has 
\begin{enumerate}
	\item[$(1)$]  ${\rm ind}$-${\rm match}(G^{S}) = {\rm ind}$-${\rm match}(G)$. \\
	\item[$(2)$] $\dim (G^{S}) = \begin{cases} \displaystyle \dim (G) & \text{$(0 \leq |S| \leq \dim (G) - 1)$}, \\ \displaystyle \dim (G) + 1 & \text{$(|S| = \dim (G))$}. \end{cases}$ 
\end{enumerate}
\end{Lemma}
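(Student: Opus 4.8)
The plan is to prove the two parts separately, in each case comparing $G$ with its $S$-suspension $G^S$ by exploiting that $G$ is the induced subgraph of $G^S$ on $V(G)$ and that the new vertex $x_{n+1}$ is adjacent in $G^S$ precisely to the vertices lying outside $S$.

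For part $(2)$, I would invoke Lemma \ref{dim(G)} to replace $\dim$ throughout by the maximum size of an independent set, and then compute the independence number of $G^S$ directly. An independent set $T$ of $G^S$ either avoids $x_{n+1}$, in which case $T \subseteq V(G)$ is independent in $G$ and so $|T| \leq \dim(G)$; or it contains $x_{n+1}$, in which case $T \setminus \{x_{n+1}\}$ must avoid every neighbor of $x_{n+1}$, that is, every vertex outside $S$, forcing $T \setminus \{x_{n+1}\} \subseteq S$. Since $S$ is itself independent, the largest independent set through $x_{n+1}$ is $\{x_{n+1}\} \cup S$, of size $|S| + 1$. Hence the independence number of $G^S$ equals $\max\{\dim(G), |S| + 1\}$, and splitting on whether $|S| \leq \dim(G) - 1$ or $|S| = \dim(G)$ (recall $|S| \leq \dim(G)$ always) yields the stated formula.

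For part $(1)$, the inequality ind-match$(G^S) \geq$ ind-match$(G)$ is immediate, since $G$ is an induced subgraph of $G^S$ and so any induced matching of $G$ remains one in $G^S$. For the reverse, I would take an induced matching $M$ of $G^S$ and distinguish whether some edge of $M$ meets $x_{n+1}$. If none does, then $M$ lies entirely in $G$ and, $G$ being induced, is an induced matching of $G$, so $|M| \leq$ ind-match$(G)$. The key (and only delicate) case is when $M$ contains an edge $e_0 = \{x_j, x_{n+1}\}$ with $x_j \notin S$; here the remaining edges of $M$ all lie in $G$, as $M$ is a matching. I claim $M = \{e_0\}$: for any other edge $e' = \{x_p, x_q\}$ of $M$, the induced-matching condition applied to the pair $\{e_0, e'\}$ rules out the edge $\{x_{n+1}, x_p\}$ of $G^S$ (which meets $e_0$ at $x_{n+1}$ and $e'$ at $x_p$), forcing $x_p \in S$, and likewise $x_q \in S$; but then $\{x_p, x_q\} \in E(G)$ has both endpoints in the independent set $S$, a contradiction. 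Thus $|M| = 1$. Since $G$ has no isolated vertex it has an edge, whence ind-match$(G) \geq 1$, and combining the two cases gives ind-match$(G^S) =$ ind-match$(G)$.

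I expect the main obstacle to be isolating the correct consequence of the induced-matching condition in the mixed case: the point is that the ``blocking'' edge to examine is $\{x_{n+1}, x_p\}$ rather than an edge internal to $G$, so that the independence of $S$ is exactly what delivers the contradiction. The dimension computation in part $(2)$, by contrast, should be routine once Lemma \ref{dim(G)} is in hand.
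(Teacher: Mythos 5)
Your proof is correct, and it is worth noting that the paper itself never proves this lemma: it is imported by citation from \cite[Lemma 1.5]{HKM}, so your argument supplies a self-contained proof of exactly what the paper takes as given. Both halves check out: in (2) the identification $\dim(G^S)=\max\{\dim(G),|S|+1\}$ via the two cases $x_{n+1}\notin T$ and $x_{n+1}\in T$ (where $T\setminus\{x_{n+1}\}\subseteq S$ is forced) is right, and in (1) the delicate mixed case is handled correctly -- the blocking edge $\{x_{n+1},x_p\}$ exists unless $x_p\in S$, and independence of $S$ then kills the edge $\{x_p,x_q\}$, so an induced matching using $x_{n+1}$ has size $1$; this is also precisely the (only) place where the no-isolated-vertex hypothesis is needed, to guarantee ${\rm ind}$-${\rm match}(G)\geq 1$. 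This is the natural argument and, in substance, the one behind the cited result.
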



\begin{Remark}\normalfont
In general, the $S$-suspension does not preserve the matching number and the minimum matching number. 
In fact, let $C_{3}$ be the 3-cycle on $V(C_{3}) = \{ x_{1}, x_{2}, x_{3}\}$ with 
$E(C_{3}) = \left\{ \{x_{1}, x_{2}\}, \{x_{2}, x_{3}\}, \{x_{1}, x_{3}\} \right\}$. 
Then match$(C_{3}) =$ min-match$(C_{3}) = 1$ and match$(C_{3}^{\emptyset}) =$ min-match$(C_{3}^{\emptyset}) = 2$. 
\end{Remark}


Next, we give two lower bounds of $\dim (G)$. 

\begin{Lemma}\label{inequalities} 
Let $G = (V(G), E(G))$ be a finite simple graph. Then
\begin{enumerate}
	\item[$(1)$] $\dim (G) \geq {\rm ind}$-${\rm match}(G)$.  
	\item[$(2)$] $\dim (G) \geq 2({\rm match}(G) - {\rm min}$-${\rm match}(G))$. 
\end{enumerate}
\end{Lemma}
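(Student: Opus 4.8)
The plan is to prove both inequalities by exhibiting independent sets of the required cardinality and invoking Lemma~\ref{dim(G)}, which identifies $\dim(G)$ with the maximum size of an independent set of $G$. In both parts the whole task reduces to pointing to the right independent set.

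For part $(1)$, I would start from an induced matching $M = \{e_{1}, \ldots, e_{a}\}$ realizing $a = {\rm ind}$-${\rm match}(G)$ and write each edge as $e_{i} = \{u_{i}, v_{i}\}$. The natural candidate is to select one endpoint from each edge, say $S = \{u_{1}, \ldots, u_{a}\}$. I claim $S$ is independent: if $\{u_{i}, u_{j}\} \in E(G)$ for some $i \neq j$, then this edge meets $e_{i}$ (in $u_{i}$) and $e_{j}$ (in $u_{j}$), contradicting the very definition of an induced matching. Since the $u_{i}$ are distinct, $|S| = a$, and Lemma~\ref{dim(G)} gives $\dim(G) \geq a$. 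This part is essentially immediate once the correct set is written down.

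For part $(2)$, set $c = {\rm match}(G)$ and $b = {\rm min}$-${\rm match}(G)$, and fix a maximum matching $M$ with $|M| = c$ together with a minimum maximal matching $M'$ with $|M'| = b$. Two consequences of the maximality of $M'$ drive the argument. First, the set $V(G) \setminus V(M')$ of vertices uncovered by $M'$ is independent, since any edge inside it could be adjoined to $M'$. Second, every edge of $M$ has at least one endpoint in $V(M')$, for otherwise such an edge could again be added to $M'$. I would then split $M$ according to how many endpoints lie in $V(M')$: let $s$ be the number of edges of $M$ with both endpoints in $V(M')$ and $t$ the number with exactly one, so that $s + t = c$ by the second observation. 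Double-counting the vertices of $V(M')$ used by edges of $M$ gives $2s + t \leq |V(M')| = 2b$; subtracting $s + t = c$ yields $t \geq 2(c - b)$. The endpoints of those $t$ edges lying outside $V(M')$ are pairwise distinct (as $M$ is a matching) and all belong to the independent set $V(G) \setminus V(M')$, so they constitute an independent set of size at least $2(c-b)$, whence $\dim(G) \geq 2(c - b)$ by Lemma~\ref{dim(G)}.

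The only genuinely non-routine point is locating the correct independent set in $(2)$, namely the set of $M$-endpoints lying outside $V(M')$, together with the short counting inequality $t \geq 2(c-b)$ that controls its size. Everything else reduces to the maximality of $M'$ and the translation of dimension into maximum independent set provided by Lemma~\ref{dim(G)}.
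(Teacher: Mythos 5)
Your proof is correct and follows essentially the same route as the paper: part $(1)$ is identical, and in part $(2)$ both arguments rest on the key observation that the vertices not covered by a minimum maximal matching $M'$ form an independent set, combined with Lemma~\ref{dim(G)}. The only difference is cosmetic: where you locate $2(c-b)$ endpoints of the maximum matching outside $V(M')$ by a double count, the paper simply notes that $|V(G)| \geq 2c$ (since a maximum matching covers $2c$ distinct vertices), so the complement of $V(M')$ already has at least $2c - 2b$ vertices.
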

\begin{proof}
(1) Assume ${\rm ind}$-${\rm match}(G) = a$.  
Let $\left\{ \{x_{1}, x_{a + 1}\}, \{x_{2}, x_{a + 2}\}, \ldots, \{x_{a}, x_{2a}\} \right\} \subset E(G)$ be an induced matching of $G$. 
Then $\{ x_{1}, x_{2}, \ldots, x_{a} \} \subset V(G)$ is an independent set of $G$. 
Hence one has $\dim (G) \geq a$ from Lemma \ref{dim(G)}. 

(2) Assume ${\rm min}$-${\rm match}(G) = b$ and ${\rm match}(G) = c$. Then we have $|V(G)|\geq 2c$. 
Let $\left\{ \{x_{1}, x_{b + 1}\}, \{x_{2}, x_{b + 2}\}, \ldots, \{x_{b}, x_{2b}\} \right\} \subset E(G)$ be a minimum matching of $G$ 
and put $W = \{x_{1}, x_{2}, \ldots, x_{2b}\}$. 
Then the induced subgraph $G_{V(G) \setminus W}$ has no edge by definition of minimum matching. 
Hence $V(G_{V(G) \setminus W})$ is an independent set of $G$. 
Since $|V(G)|\geq 2c$ and $|W| = 2b$, it follows that $|V(G_{V(G) \setminus W})| \geq 2(c - b)$. 
Therefore $\dim (G) \geq 2(c - b)$.  
\end{proof}

Let $s \geq 1$ be an integer. 
Let $G^{{\rm star}(x_{v})}_{s}$ be the graph on $V(G^{{\rm star}(x_{v})}_{s}) = \{ x_{1}, \ldots, x_{s}, x_{v} \}$ with 
$E(G^{{\rm star}(x_{v})}_{s}) = \left\{ \{x_{i}, x_{v}\} : 1 \leq i \leq s \right\}$; see \cite[Figure 2]{HKMT}.  
We call $G^{{\rm star}(x_{v})}_{s}$ the {\em star graph}. 
The star graph is the complete bipartite graph $K_{1, s}$. 
%
%
%
The {\em complete graph} $K_{n}$ is the graph on the vertex set $V(K_{n}) = \{x_{1}, \ldots, x_{n}\}$ with 
its edge set $E(K_{n}) = \left\{ \{x_{i}, x_{j}\} : 1 \leq i < j \leq n \right\}$. 
Lemma \ref{dim(G)} says that $\dim (G) = 1$ if and only if $G = K_{n}$. 
The following lemma mentions invariants of star graphs and complete graphs.  

\begin{Lemma}\label{starK2n}
Let $s \geq 1$ be an integer. 
\begin{enumerate}
	\item[$(1)$] Let $G^{{\rm star}(x_{v})}_{s}$ be the star graph which appears as above. 
	Then ${\rm ind}$-${\rm match}(G^{{\rm star}(x_{v})}_{s}) = {\rm min}$-${\rm match}(G^{{\rm star}(x_{v})}_{s}) = {\rm match}(G^{{\rm star}(x_{v})}_{s}) = 1$ and $\dim (G^{{\rm star}(x_{v})}_{s}) = s$. 
	\item[$(2)$] Let $K_{2s}$ be the complete graph on $2s$ vertices. 
	Then ${\rm ind}$-${\rm match}(K_{2s}) = \dim (K_{2s}) = 1$ and ${\rm min}$-${\rm match}(K_{2s}) = {\rm match}(K_{2s}) = s$. 
\end{enumerate}
\end{Lemma}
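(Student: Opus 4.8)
The plan is to treat the two graphs separately, reading off each invariant directly from the definitions together with Lemma \ref{dim(G)}.

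For the star graph $G^{{\rm star}(x_{v})}_{s} = K_{1,s}$ in part (1), I would first observe that every edge contains the central vertex $x_{v}$, so any two distinct edges intersect at $x_{v}$. Consequently no matching can contain more than one edge, giving ${\rm match}(G^{{\rm star}(x_{v})}_{s}) = 1$; moreover a single edge is vacuously an induced matching and is maximal (any further edge would again meet $x_{v}$), so all three matching numbers collapse to $1$. For the dimension, the set $\{ x_{1}, \ldots, x_{s} \}$ of leaves is independent because the leaves are pairwise non-adjacent, while adjoining $x_{v}$ destroys independence; hence this is a maximum independent set of size $s$, and Lemma \ref{dim(G)} yields $\dim (G^{{\rm star}(x_{v})}_{s}) = s$.

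For the complete graph $K_{2s}$ in part (2), the value $\dim (K_{2s}) = 1$ is immediate from the remark preceding the lemma, or equivalently from the fact that any independent set has at most one vertex since all pairs are adjacent. The induced matching number equals $1$ because, given any two disjoint edges $\{x_{i}, x_{j}\}$ and $\{x_{k}, x_{l}\}$, completeness supplies the edge $\{x_{j}, x_{k}\}$ meeting both, so no induced matching has two edges. For the matching number, the explicit perfect matching $\{x_{1}, x_{2}\}, \{x_{3}, x_{4}\}, \ldots, \{x_{2s-1}, x_{2s}\}$ shows ${\rm match}(K_{2s}) \geq s$, and $|V(K_{2s})| = 2s$ forces equality.

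The only step requiring a genuine (if short) argument is ${\rm min}$-${\rm match}(K_{2s}) = s$. Here I would take an arbitrary maximal matching $M$ and note that if two vertices were left uncovered, completeness would make them an edge extending $M$, contradicting maximality; thus at most one vertex is uncovered, so $2|M| \geq 2s - 1$ and hence $|M| \geq s$. Combined with $|M| \leq {\rm match}(K_{2s}) = s$, this gives $|M| = s$ for every maximal matching, whence ${\rm min}$-${\rm match}(K_{2s}) = s$. This uniform covering behavior of the complete graph is the main (and essentially the only) obstacle; every other assertion is a direct reading of the definitions.
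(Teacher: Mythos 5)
Your proof is correct. The paper states Lemma \ref{starK2n} without proof, treating it as a routine consequence of the definitions and Lemma \ref{dim(G)}, and your direct verification is exactly the intended argument; in particular, you correctly isolate and settle the one step that needs genuine justification, namely that by completeness every maximal matching of $K_{2s}$ leaves at most one vertex uncovered, which forces ${\rm min}$-${\rm match}(K_{2s}) = s$.
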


\begin{Lemma}\label{inducedsubgraph}
Let $G$ be a finite simple graph on the vertex set $V(G)$ and $W \subset V(G)$ a subset. 
Then one has 
\begin{enumerate}
	\item[$(1)$] ${\rm match}(G_{W}) \leq {\rm match}(G)$. 
	\item[$(2)$] ${\rm min}$-${\rm match}(G_{W}) \leq {\rm min}$-${\rm match}(G)$.  
	\item[$(3)$] ${\rm ind}$-${\rm match}(G_{W}) \leq {\rm ind}$-${\rm match}(G)$. 
\end{enumerate}
\end{Lemma}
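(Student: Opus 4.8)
The plan is to exploit the single structural fact that an induced subgraph satisfies $E(G_{W}) \subseteq E(G)$, together with the elementary characterization of a maximal matching through its uncovered vertices. Parts $(1)$ and $(3)$ follow almost immediately from the inclusion. For $(1)$, any matching $M$ of $G_{W}$ is a set of pairwise disjoint edges lying in $E(G_{W}) \subseteq E(G)$, hence is already a matching of $G$; applying this to a maximum matching of $G_{W}$ gives ${\rm match}(G_{W}) \leq {\rm match}(G)$. For $(3)$ the same inclusion does slightly more work: if $M = \{e_{1}, \ldots, e_{s}\}$ is an induced matching of $G_{W}$, I claim it is already an induced matching of $G$. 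Indeed, any edge $e \in E(G)$ meeting two distinct members $e_{i}, e_{j}$ of $M$ must, since $e_{i} \cap e_{j} = \emptyset$, join a vertex of $e_{i}$ to a vertex of $e_{j}$; both of these vertices lie in $W$, so $e \in E(G_{W})$, contradicting that $M$ is induced in $G_{W}$. Hence no such $e$ exists, $M$ is induced in $G$, and ${\rm ind}$-${\rm match}(G_{W}) \leq {\rm ind}$-${\rm match}(G)$.

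The genuinely delicate assertion is $(2)$, because a maximal matching of $G_{W}$ need not be maximal in $G$, and restricting a maximal matching of $G$ to $W$ need not preserve maximality, so neither direction is immediate from the inclusion alone. My plan is to start from a minimum maximal matching $N$ of $G$, so that $|N| = {\rm min}$-${\rm match}(G)$, and to manufacture from it a maximal matching of $G_{W}$ of size at most $|N|$; since ${\rm min}$-${\rm match}(G_{W})$ is by definition the minimum over all maximal matchings of $G_{W}$, this suffices. I would partition the edges of $N$ into those with both endpoints in $W$ (the inside edges $P$), those with exactly one endpoint in $W$ (the crossing edges, say $q$ of them), and those with no endpoint in $W$. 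The inside edges $P$ form a matching of $G_{W}$, which I then extend greedily to a maximal matching $M$ of $G_{W}$.

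The main obstacle is controlling how many edges this extension adds, since a priori $M$ could be far larger than $P$ and the bound could fail. The key input is the standard observation that the vertices left uncovered by the maximal matching $N$ form an independent set of $G$ (otherwise an uncovered edge could be added to $N$, contradicting maximality). The vertices of $W$ not covered by $P$ are exactly the $W$-vertices uncovered by $N$ together with the $W$-endpoints of the crossing edges; the former set is independent in $G$, hence in $G_{W}$, so every edge added during the extension must use at least one of the $q$ crossing $W$-endpoints. Because the added edges are pairwise disjoint, there are at most $q$ of them, whence $|M| \leq |P| + q \leq |N| = {\rm min}$-${\rm match}(G)$. This gives ${\rm min}$-${\rm match}(G_{W}) \leq |M| \leq {\rm min}$-${\rm match}(G)$ and completes the argument.
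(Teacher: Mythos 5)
Your proof is correct, and it is worth noting that the paper offers no proof of this lemma at all --- it is stated as a known elementary fact --- so your argument is not a variant of the authors' proof but a complete one supplied from scratch. Parts $(1)$ and $(3)$ are routine, and you correctly locate where inducedness matters in $(3)$: an edge of $G$ meeting two disjoint edges of the matching joins two vertices of $W$, hence lies in $E(G_{W})$ precisely because $G_{W}$ is the induced subgraph. The genuine content is part $(2)$, and you rightly flag why it is delicate: the minimum matching number is \emph{not} monotone under passing to arbitrary subgraphs (for the path on four vertices it equals $1$, while for its spanning subgraph consisting of the two end edges it equals $2$), so inducedness must enter somewhere. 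Your counting argument uses it in exactly the right place: the inside edges $P$ of a minimum maximal matching $N$ of $G$ are edges of $G_{W}$ \emph{because} $G_{W}$ is induced; extending $P$ to a maximal matching $M$ of $G_{W}$, every edge of $M \setminus P$ has both endpoints in $W$ and uncovered by $P$, and since the vertices uncovered by $N$ form an independent set of $G$, each such edge must contain one of the $q$ distinct $W$-endpoints of the crossing edges of $N$; pairwise disjointness of the edges of $M \setminus P$ then gives $|M \setminus P| \leq q$, hence ${\rm min}$-${\rm match}(G_{W}) \leq |M| \leq |P| + q \leq |N| = {\rm min}$-${\rm match}(G)$. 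All steps check out, and this is a clean, self-contained justification of the only nontrivial assertion in the lemma.
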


\begin{Lemma}\label{match}
Let $G = (V(G), E(G))$ be a finite simple graph. 
\begin{enumerate}
	\item[$(1)$] Assume that there exist two edges $\{x_{i}, x_{j}\}, \{x_{i}, x_{k}\} \in E(G)$ 
	such that $\deg (x_{j}) = \deg (x_{k}) = 1$. 
	Then 
	\begin{enumerate}
		\item ${\rm match}(G) = {\rm match}(G_{V(G) \setminus \{x_{k}\}})$.
		\item ${\rm min}$-${\rm match}(G) = {\rm min}$-${\rm match}(G_{V(G) \setminus \{x_{k}\}})$.
		\item ${\rm ind}$-${\rm match}(G) = {\rm ind}$-${\rm match}(G_{V(G) \setminus \{x_{k}\}})$.
	\end{enumerate}
	\item[$(2)$] ${\rm match}(G) \leq \left\lfloor \frac{|V(G)|}{2} \right\rfloor$. 
\end{enumerate} 
\end{Lemma}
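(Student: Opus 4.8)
The plan is to settle (2) directly and to reduce each equality in (1) to the inequality already furnished by Lemma \ref{inducedsubgraph} together with a reverse inequality obtained by a leaf-swapping argument. For (2), I would note that if $\{e_{1}, \ldots, e_{s}\}$ is a matching of $G$, then its $2s$ endpoints are pairwise distinct, whence $2s \leq |V(G)|$ and therefore ${\rm match}(G) \leq \lfloor |V(G)|/2 \rfloor$.

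For (1), set $G' = G_{V(G) \setminus \{x_{k}\}}$. Since $\deg(x_{k}) = 1$ with unique neighbor $x_{i}$, passing from $G$ to $G'$ deletes only the edge $\{x_{i}, x_{k}\}$; in particular $x_{j}$ remains a leaf adjacent to $x_{i}$ in $G'$. By Lemma \ref{inducedsubgraph}, each of the three matching numbers of $G'$ is at most the corresponding number of $G$, so in each case it suffices to prove the reverse inequality. For ${\rm match}$ and ${\rm ind}$-${\rm match}$ I would take a maximum matching, respectively a maximum induced matching, $M$ of $G$. If $\{x_{i}, x_{k}\} \notin M$, then $x_{k}$ is unmatched, because $\{x_{i}, x_{k}\}$ is its only incident edge, so $M$ is already a matching of $G'$ of the same type. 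If $\{x_{i}, x_{k}\} \in M$, I would replace this edge by $\{x_{i}, x_{j}\}$; this is legitimate since the leaf $x_{j}$ is necessarily unused by $M$, and the swap yields a matching of the same size that avoids $x_{k}$. In the induced case the degree-one hypothesis on $x_{j}$ is exactly what makes the swap safe: the only edge meeting $x_{j}$ is $\{x_{i}, x_{j}\}$, so any edge connecting $\{x_{i}, x_{j}\}$ to another edge of $M$ would have to issue from $x_{i}$ and would already have connected $\{x_{i}, x_{k}\}$ to that edge, contradicting that $M$ is induced. Either way one obtains a matching of $G'$ of the same type and cardinality as $M$.

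For ${\rm min}$-${\rm match}$ I would argue in the opposite direction, starting from a minimum maximal matching $M'$ of $G'$. Here maximality is decisive: since $x_{j}$ is a leaf at $x_{i}$ in $G'$, if $x_{i}$ were unmatched then $x_{j}$ would be unmatched as well and $\{x_{i}, x_{j}\}$ could be adjoined to $M'$, contradicting maximality; hence $x_{i}$ is matched in $M'$. Consequently the edge $\{x_{i}, x_{k}\}$, the unique edge of $G$ absent from $G'$, cannot be added to $M'$, so $M'$ is already a maximal matching of $G$. This gives ${\rm min}$-${\rm match}(G) \leq |M'| = {\rm min}$-${\rm match}(G')$, which together with Lemma \ref{inducedsubgraph} yields the desired equality.

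I expect the ${\rm min}$-${\rm match}$ case to be the main obstacle, since maximality is a global condition under which one cannot freely restrict or swap edges; the argument succeeds only because the leaf $x_{j}$ forces $x_{i}$ to be saturated in every maximal matching of $G'$. The induced-matching swap is the second delicate point, and it is precisely the hypothesis $\deg(x_{j}) = 1$ that prevents the swap from creating a new connecting edge.
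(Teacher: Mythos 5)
Your proof is correct. Note that the paper itself states Lemma \ref{match} without any proof (it is offered as an elementary observation), so there is no authorial argument to compare against; your write-up supplies exactly the kind of reasoning the authors presumably had in mind. All three parts check out: the leaf-swap $\{x_i,x_k\}\mapsto\{x_i,x_j\}$ is legitimate for both ${\rm match}$ and ${\rm ind}$-${\rm match}$ for the reasons you give, and your observation that the leaf $x_j$ forces $x_i$ to be saturated in every maximal matching of $G_{V(G)\setminus\{x_k\}}$ -- so that such a matching remains maximal in $G$ -- correctly handles the one direction of (1)(b) that Lemma \ref{inducedsubgraph} does not cover; the only external input you use is Lemma \ref{inducedsubgraph}, which is a stated lemma of the paper and therefore fair to invoke.
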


\begin{Lemma}
\label{disconnected}
Let $G$ be a finite disconnected simple graph and $G_{1}, \ldots, G_{\ell}$ the connected components of $G$. 
Then we have 
\begin{enumerate}
	\item[$(1)$] ${\rm match}(G) = \sum_{i = 1}^{\ell} {\rm match}(G_{i})$. 
	\item[$(2)$] ${\rm min}$-${\rm match}(G) = \sum_{i = 1}^{\ell} {\rm min}$-${\rm match}(G_{i})$.  
	\item[$(3)$] ${\rm ind}$-${\rm match}(G) = \sum_{i = 1}^{\ell} {\rm ind}$-${\rm match}(G_{i})$. 
	\item[$(4)$] $\dim (G) = \sum_{i = 1}^{\ell} \dim (G_{i})$. 
\end{enumerate}
\end{Lemma}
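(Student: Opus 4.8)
The plan is to exploit the one structural fact underlying all four statements: every edge of $G$ lies entirely within a single connected component, so that no edge joins vertices belonging to two distinct $G_i$. In particular any two edges lying in different components are automatically vertex-disjoint, and no edge of $G$ can meet edges in two different components at once. For each invariant I will set up a correspondence between the relevant subsets of $G$ and tuples of such subsets over the components $G_1, \ldots, G_\ell$, under which the measured quantity is additive; optimizing then passes through the sum.

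First I would treat (1). Given a matching $M$ of $G$, put $M_i = M \cap E(G_i)$. Edges of $M$ inside $G_i$ are pairwise disjoint and edges in distinct components are disjoint for free, so each $M_i$ is a matching of $G_i$, and $M = \bigsqcup_{i=1}^{\ell} M_i$ with $|M| = \sum_{i=1}^{\ell} |M_i|$. Conversely, the union of arbitrary matchings $M_i$ of the $G_i$ is a matching of $G$. Hence matchings of $G$ correspond bijectively to tuples of matchings of the $G_i$ with additive cardinality, and maximizing $|M|$ reduces to maximizing each $|M_i|$ independently, giving ${\rm match}(G) = \sum_{i=1}^{\ell} {\rm match}(G_i)$. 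Part (4) is entirely parallel once I invoke Lemma \ref{dim(G)} to rewrite $\dim (G)$ as the maximum size of an independent set: an independent set $S$ decomposes as $S = \bigsqcup_{i=1}^{\ell} (S \cap V(G_i))$ with each piece independent in $G_i$, and any union of independent sets over the components is independent in $G$, since the only edges that could be violated lie inside a single component. Maximizing $|S|$ then yields $\dim (G) = \sum_{i=1}^{\ell} \dim (G_i)$.

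The parts requiring a genuine (if small) extra verification are (2) and (3), where the defining side condition must also be shown to localize, and this is where I expect the only real care is needed. For (2), using the decomposition $M = \bigsqcup_{i=1}^{\ell} M_i$, I would check that $M$ is maximal in $G$ if and only if every $M_i$ is maximal in $G_i$: an edge $e \in E(G) \setminus M$ lies in some $G_i$, and because edges of $M$ in other components cannot meet $e$, the failure of $M \cup \{e\}$ to be a matching is equivalent to $e$ meeting an edge of $M_i$. Thus maximal matchings of $G$ correspond exactly to tuples of maximal matchings of the $G_i$, and since the choices over components are independent, minimizing the additive size gives ${\rm min}$-${\rm match}(G) = \sum_{i=1}^{\ell} {\rm min}$-${\rm match}(G_i)$. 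For (3) the same decomposition applies, and the point to observe is that the induced-matching condition for a pair of edges in different components is vacuous, since a connecting edge would have to bridge the two components, whereas for a pair of edges in the same component it is precisely the induced condition there. Therefore $M$ is an induced matching of $G$ if and only if each $M_i$ is an induced matching of $G_i$, and maximizing the additive size yields ${\rm ind}$-${\rm match}(G) = \sum_{i=1}^{\ell} {\rm ind}$-${\rm match}(G_i)$. In every case the crux is the same elementary observation that no edge bridges two components, so I anticipate no substantive obstacle beyond writing these equivalences out cleanly.
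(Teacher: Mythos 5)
Your proof is correct, and all four parts are handled properly, including the two places where care is actually needed: the equivalence ``$M$ maximal in $G$ iff each $M_i$ maximal in $G_i$'' for (2), and the observation that the induced-matching condition is vacuous across components for (3). Note that the paper states Lemma \ref{disconnected} without any proof, treating it as a known elementary fact; your componentwise decomposition argument is precisely the standard justification it implicitly relies on, so there is no divergence of approach to report.
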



\section{Proof of Theorem \ref{main}}

In this section, we give a proof of Theorem \ref{main}. 

\begin{proof}({\em Proof of Theorem \ref{main}.})

(1) $\Rightarrow$ (2) : It follows from \cite[Proposition 2.1]{HHKT} and Lemma \ref{inequalities}. 

(2) $\Rightarrow$ (1) : Let $a, b, c, d$ be positive integers satisfying $1 \leq a \leq b \leq c \leq 2b$ and $d \geq \max\{a, 2(c - b)\}$. 
Put $V_{2b} = \{v_{1}, v_{2}, \ldots, v_{2b}\}$. 

$\bullet \ ${\bf Case 1: $a = 1, 1 \leq b = c$ and $d \geq 1. $} \\ 
Let $G_{1, b, b, d}^{(1)}$ be the graph such that 
\begin{eqnarray*}
V(G_{1, b, b, d}^{(1)}) &=& V_{2b} \cup \{x_{1}, x_{2}, \ldots, x_{d - 1}\} \\
E(G_{1, b, b, d}^{(1)}) &=& \left\{ \{v_{i}, v_{j}\} : 1 \leq i < j \leq 2b \right\} \cup \left\{ \{v_{1}, x_{k}\} : 1 \leq k \leq d - 1 \right\}; 
\end{eqnarray*}
see Figure \ref{fig:G_{1, b, b, d}^{(1)}}. 

\begin{figure}[htbp]
\centering
\bigskip

\begin{xy}
	\ar@{} (0,0);(75, -12)  *++! U{v_{1}} *\cir<4pt>{} = "C"
	\ar@{-} "C";(55, 0) *++! D{x_{1}} *\cir<4pt>{} = "D";
	\ar@{-} "C";(65, 0) *++!D{x_{2}} *\cir<4pt>{} = "E";
	\ar@{} "C"; (79, 3.7) *++!U{\cdots}
	\ar@{-} "C";(95, 0) *++!D{x_{d - 1}} *\cir<4pt>{} = "F";
	\ar@{} (0,0);(75,-21.9) *\cir<25pt>{};
	\ar@{} (0,0);(75,-22.5) *{\text{$K_{2b}$}};
\end{xy}


  \caption{The graph $G_{1, b, b, d}^{(1)}$}
  \label{fig:G_{1, b, b, d}^{(1)}}
\end{figure}
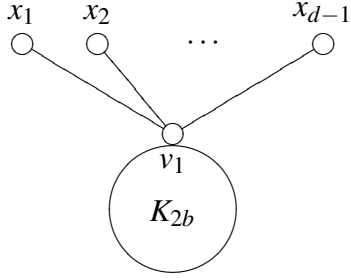

Then ${\rm ind}$-${\rm match}(G_{1, b, b, d}^{(1)}) = 1$ since each edge of $G_{1, b, b, d}^{(1)}$ contains 
a vertex belonging to $V_{2b}$. 
We prove ${\rm min}$-${\rm match}(G_{1, b, b, d}^{(1)}) = {\rm match}(G_{1, b, b, d}^{(1)}) = b$. 
If $d = 1$, then $G_{1, b, b, 1}^{(1)} = K_{2b}$. 
Hence ${\rm min}$-${\rm match}(G_{1, b, b, 1}^{(1)}) = {\rm match}(G_{1, b, b, 1}^{(1)}) = b$ by Lemma \ref{starK2n}. 
Assume $d > 1$. 
Since $K_{2b}$ is an induced subgraph of $(G_{1, b, b, d}^{(1)})$,  
by using Lemma \ref{starK2n}, \ref{inducedsubgraph} and \ref{match}, we have
$b = {\rm min}$-${\rm match}(K_{2b}) \leq {\rm min}$-${\rm match}(G_{1, b, b, d}^{(1)}) \leq {\rm match}(G_{1, b, b, d}^{(1)}) = {\rm match}(G_{1, b, b, 2}^{(1)}) \leq b. $
Thus we have ${\rm min}$-${\rm match}(G_{1, b, b, d}^{(1)}) = {\rm match}(G_{1, b, b, d}^{(1)}) = b$. 
A subset $\{ x_{1}, \ldots, x_{d - 1}, v_{2b} \} \subset V(G_{1, b, b, d}^{(1)})$ is an independent set of $G_{1, b, b, d}^{(1)}$. 
Hence $\dim (G_{1, b, b, d}^{(1)}) \geq d$ by Lemma \ref{dim(G)}. 
Let $W \subset V(G_{1, b, b, d}^{(1)})$ with $|W| \geq d + 1$. 
Then $|W \cap V_{2b}| \geq 2$. Thus $W$ is not an independent set. 
Therefore one has $\dim (G_{1, b, b, d}^{(1)}) = d$. 

$\bullet \ ${\bf Case 2: $a = 1, 1 \leq b < c \leq 2b$ and $d = 2(c - b). $} \\ 
Let $G_{1, b, c, 2(c - b)}^{(2)}$ be the graph such that 
\begin{eqnarray*}
V(G_{1, b, c, 2(c - b)}^{(2)}) &=& V_{2b} \cup \{x_{1}, x_{2}, \ldots, x_{2(c - b)}\}, \\
E(G_{1, b, c, 2(c - b)}^{(2)}) &=& \left\{ \{v_{i}, v_{j}\} : 1 \leq i < j \leq 2b \right\} \cup \bigcup_{k = 1}^{b} \bigcup_{\ell = 1}^{c - b} \left\{ \{v_{k}, x_{\ell}\}, \{ v_{b + k}, x_{c - b + \ell} \} \right\};
\end{eqnarray*}
see Figure \ref{fig:G_{1, b, c, 2(c - b)}^{(2)}}. 

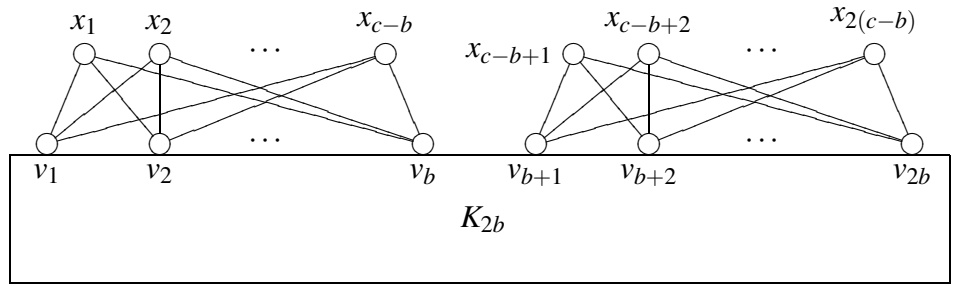
\begin{figure}[htbp]
\centering

\begin{xy}
	\ar@{} (0,0);(20, -16)  *++! U{v_{1}} *\cir<4pt>{} = "V1"
	\ar@{} (0,0);(35, -16)  *++! U{v_{2}} *\cir<4pt>{} = "V2"
	\ar@{} (0,0);(70, -16)  *++! U{v_{b}} *\cir<4pt>{} = "Vb"
	\ar@{-} "V1";(25, -4) *++! D{x_{1}} *\cir<4pt>{} = "X1";
	\ar@{-} "V1";(35, -4) *++!D{x_{2}} *\cir<4pt>{} = "X2";
	\ar@{} "V1"; (49, 0) *++!U{\cdots}
	\ar@{} "V1"; (49, -12) *++!U{\cdots}
	\ar@{-} "V1";(65, -4) *++!D{x_{c - b}} *\cir<4pt>{} = "Xc-b";
	\ar@{-} "V2"; "X1";
	\ar@{-} "V2"; "X2";
	\ar@{-} "V2"; "Xc-b";
	\ar@{-} "Vb"; "X1";
	\ar@{-} "Vb"; "X2";
	\ar@{-} "Vb"; "Xc-b";
	\ar@{} (0,0);(85, -16)  *++! U{v_{b + 1}} *\cir<4pt>{} = "Vb+1";
	\ar@{} (0,0);(100, -16)  *++! U{v_{b + 2}} *\cir<4pt>{} = "Vb+2";
	\ar@{} (0,0);(135, -16)  *++! U{v_{2b}} *\cir<4pt>{} = "V2b";
	\ar@{-} "Vb+1";(90, -4) *++! R{x_{c - b + 1}} *\cir<4pt>{} = "Xc-b+1";
	\ar@{-} "Vb+1";(100, -4) *++!D{x_{c - b + 2}} *\cir<4pt>{} = "Xc-b+2";
	\ar@{-} "Vb+1";(130, -4) *++!D{x_{2(c - b)}} *\cir<4pt>{} =  "X2(c-b)";
	\ar@{-} "Vb+2"; "Xc-b+1";
	\ar@{-} "Vb+2"; "Xc-b+2";
	\ar@{-} "Vb+2"; "X2(c-b)";
	\ar@{-} "V2b"; "Xc-b+1";
	\ar@{-} "V2b"; "Xc-b+2";
	\ar@{-} "V2b"; "X2(c-b)";
	\ar@{} "V1"; (115, 0) *++!U{\cdots}
	\ar@{} "V1"; (115, -12) *++!U{\cdots}
	\ar@{-} (15,-17.5); (140, -17.5);
	\ar@{-} (15,-17.5); (15, -34.5);
	\ar@{-} (15,-34.5); (140, -34.5);
	\ar@{-} (140,-17.5); (140, -34.5)
	\ar@{} (0,0);(78,-26) *{\text{$K_{2b}$}};
\end{xy}



  \caption{The graph $G_{1, b, c, 2(c - b)}^{(2)}$}
  \label{fig:G_{1, b, c, 2(c - b)}^{(2)}}
\end{figure}

Then ${\rm ind}$-${\rm match}(G_{1, b, c, 2(c - b)}^{(2)}) = 1$ since each edge of $G_{1, b, c, 2(c - b)}^{(2)}$ contains 
a vertex belonging to $V_{2b}$.
Since $\bigcup_{i = 1}^{b} \{v_{i}, v_{b + i}\} \subset E(G_{1, b, c, 2(c - b)}^{(2)})$ is a maximal matching, one has 
${\rm min}$-${\rm match}(G_{1, b, c, 2(c - b)}^{(2)}) \leq b$. 
Moreover, by virtue of Lemma \ref{starK2n} and \ref{inducedsubgraph}, it follows that  $b = {\rm min}$-${\rm match}(K_{2b}) \leq {\rm min}$-${\rm match}(G_{1, b, c, 2(c - b)}^{(2)})$. 
Therefore ${\rm min}$-${\rm match}(G_{1, b, c, 2(c - b)}^{(2)}) = b$. 
Let $E^{(2)} = \bigcup_{i = 1}^{c - b} \left\{ \{v_{i}, x_{i}\}, \{v_{b + j}, x_{c - b + j}\} \right\}  \cup \bigcup_{j = 1}^{2b - c} \{v_{c - b + j }, v_{c + j}\}$. 
Then $E^{(2)}$ is a perfect matching of $G_{1, b, c, 2(c - b)}^{(2)}$ 
(i.e. a matching in which every vertex of $G_{1, b, c, 2(c - b)}^{(2)}$ is contained in some edge belonging to $E^{(2)}$). 
Hence ${\rm match}(G_{1, b, c, 2(c - b)}^{(2)}) = c$. 
A subset $\{x_{1}, x_{2}, \ldots, x_{2(c - b)}\} \subset V(G_{1, b, c, 2(c - b)}^{(2)})$ 
is an independent set of $G_{1, b, c, 2(c - b)}^{(2)}$. 
Hence we have $\dim (G_{1, b, c, 2(c - b)}^{(2)}) \geq 2(c - b)$ by Lemma \ref{dim(G)}. 
Let $W \subset V(G_{1, b, c, 2(c - b)}^{(2)})$ with $|W| \geq 2(c - b) + 1$. 
Then $|W \cap V_{2b}| \geq 1$. 
We may assume $v_{1} \in W$. 
Assume that $W$ is an independent set. 
Since $\deg(v_{1}) = b + c - 1$ and $b < c$, we have $|W| \leq |V(G_{1, b, c, 2(c - b)}^{(2)})| - \deg (v_{1}) = c - b + 1 < 2(c - b) + 1$, 
but this is a contradiction. 
Thus $W$ is not an independent set. 
Therefore one has $\dim (G_{1, b, c, 2(c - b)}^{(2)}) = 2(c - b)$. 

$\bullet \ ${\bf Case 3: $a = 1, 1 \leq b < c \leq 2b$ and $d > 2(c - b). $} \\ 
Let $G_{1, b, c, d}^{(3)}$ be the graph such that 
\begin{eqnarray*}
V(G_{1, b, c, d}^{(3)}) &=& V_{2b} \cup \{x_{1}, x_{2}, \ldots, x_{2(c - b)}\} \cup \{ y_{1}, y_{2}, \ldots, y_{d - 2(c - b)} \}, \\
E(G_{1, b, c, d}^{(3)}) &=& \left\{ \{v_{i}, v_{j}\} : 1 \leq i < j \leq 2b \right\} \cup \bigcup_{k = 1}^{b} \bigcup_{\ell = 1}^{c - b} \left\{ \{v_{k}, x_{\ell}\}, \{ v_{b + k}, x_{c - b + \ell} \} \right\} \\
&\cup& \left\{ \{v_{1}, y_{k}\} : 1 \leq k \leq d - 2(c - b) \right\};
\end{eqnarray*}
see Figure \ref{fig:G_{1, b, c, d}^{(3)}}. 

\begin{figure}[htbp]
\centering
\bigskip

\begin{xy}
	\ar@{} (0,0);(20, -16)  *++! U{v_{1}} *\cir<4pt>{} = "V1"
	\ar@{} (0,0);(35, -16)  *++! U{v_{2}} *\cir<4pt>{} = "V2"
	\ar@{} (0,0);(70, -16)  *++! U{v_{b}} *\cir<4pt>{} = "Vb"
	\ar@{-} "V1";(25, -4) *++! D{x_{1}} *\cir<4pt>{} = "X1";
	\ar@{-} "V1";(35, -4) *++!D{x_{2}} *\cir<4pt>{} = "X2";
	\ar@{} "V1"; (49, 0) *++!U{\cdots}
	\ar@{} "V1"; (49, -12) *++!U{\cdots}
	\ar@{-} "V1";(65, -4) *++!D{x_{c - b}} *\cir<4pt>{} = "Xc-b";
	\ar@{-} "V2"; "X1";
	\ar@{-} "V2"; "X2";
	\ar@{-} "V2"; "Xc-b";
	\ar@{-} "Vb"; "X1";
	\ar@{-} "Vb"; "X2";
	\ar@{-} "Vb"; "Xc-b";
	\ar@{} (0,0);(85, -16)  *++! U{v_{b + 1}} *\cir<4pt>{} = "Vb+1";
	\ar@{} (0,0);(100, -16)  *++! U{v_{b + 2}} *\cir<4pt>{} = "Vb+2";
	\ar@{} (0,0);(135, -16)  *++! U{v_{2b}} *\cir<4pt>{} = "V2b";
	\ar@{-} "Vb+1";(90, -4) *++! R{x_{c - b + 1}} *\cir<4pt>{} = "Xc-b+1";
	\ar@{-} "Vb+1";(100, -4) *++!D{x_{c - b + 2}} *\cir<4pt>{} = "Xc-b+2";
	\ar@{-} "Vb+1";(130, -4) *++!D{x_{2(c - b)}} *\cir<4pt>{} =  "X2(c-b)";
	\ar@{-} "Vb+2"; "Xc-b+1";
	\ar@{-} "Vb+2"; "Xc-b+2";
	\ar@{-} "Vb+2"; "X2(c-b)";
	\ar@{-} "V2b"; "Xc-b+1";
	\ar@{-} "V2b"; "Xc-b+2";
	\ar@{-} "V2b"; "X2(c-b)";
	\ar@{-} "V1";(5, 4) *++! R{y_{1}} *\cir<4pt>{};
	\ar@{-} "V1";(5, -2) *++! R{y_{2}} *\cir<4pt>{};
	\ar@{} (0,0); (5, -2.5) *++!U{\vdots}
	\ar@{-} "V1";(5, -16) *++! U{y_{d - 2(c - b)}} *\cir<4pt>{};
	\ar@{} "V1"; (115, 0) *++!U{\cdots}
	\ar@{} "V1"; (115, -12) *++!U{\cdots}
	\ar@{-} (15,-17.5); (140, -17.5);
	\ar@{-} (15,-17.5); (15, -34.5);
	\ar@{-} (15,-34.5); (140, -34.5);
	\ar@{-} (140,-17.5); (140, -34.5)
	\ar@{} (0,0);(78,-26) *{\text{$K_{2b}$}};
\end{xy}

\bigskip

  \caption{The graph $G_{1, b, c, d}^{(3)}$}
  \label{fig:G_{1, b, c, d}^{(3)}}
\end{figure}
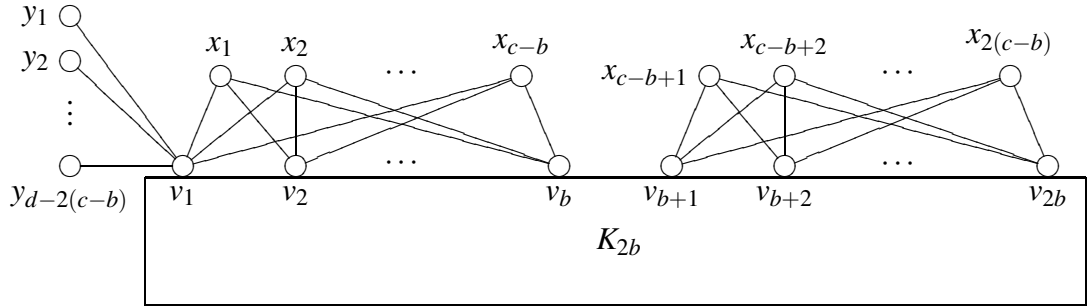

Then ${\rm ind}$-${\rm match}(G_{1, b, c, d}^{(3)}) = 1$ since each edge of $G_{1, b, c, d}^{(3)}$ contains 
a vertex belonging to $V_{2b}$.
By the same argument in the Case 2, we can see that ${\rm min}$-${\rm match}(G_{1, b, c, d}^{(3)}) = b$. 
Since $G_{1, b, c, 2(c - b)}^{(2)}$ which appears in Case 2 is an induced subgraph of $G_{1, b, c, d}^{(3)}$, 
hence ${\rm match}(G_{1, b, c, d}^{(3)}) \geq c$ from Lemma \ref{inducedsubgraph}. 
Moreover, $G_{1, b, c, 2(c - b) + 1}^{(3)}$ is also an induced subgraph of $G_{1, b, c, d}^{(3)}$ 
with $|V(G_{1, b, c, 2(c - b) + 1}^{(3)})| = 2c + 1$. 
Thus, by virtue of Lemma \ref{inducedsubgraph} and \ref{match}, we have 
${\rm match}(G_{1, b, c, d}^{(3)}) = {\rm match}(G_{1, b, c, 2(c - b) + 1}^{(3)}) \leq c$. 
Therefore ${\rm match}(G_{1, b, c, d}^{(3)}) = c$.  
A subset $\{x_{1}, x_{2}, \ldots, x_{2(c - b)}\} \cup \{ y_{1}, y_{2}, \ldots, y_{d - 2(c - b)} \} \subset V(G_{1, b, c, d}^{(3)})$ 
is an independent set of $G_{1, b, c, d}^{(3)}$. 
Hence $\dim (G_{1, b, c, d}^{(3)}) \geq 2(c - b) + d - 2(c - b) = d$ by Lemma \ref{dim(G)}. 
Let $W \subset V(G_{1, b, c, d}^{(3)})$ with $|W| \geq d + 1$. 
Then there exists a vertex $v \in V_{2b}$. 
Assume that $W$ is an independent set.  
Since $\deg(v) \geq b + c - 1$ and $b < c$, we have 
$|W| \leq |V(G_{1, b, c, d)}^{(3)})| - \deg (v) \leq d + b - c + 1 < d + 1$, but this is a contradiction. 
Thus $W$ is not an independent set. 
Therefore $\dim (G_{1, b, c, d}^{(3)}) = d$.

$\bullet \ ${\bf Case 4: $1 < a \leq b = c$ and $d = a. $} \\ 
Let $G_{a, b, b, a}^{(4)}$ be the graph such that 
\begin{eqnarray*}
V(G_{a, b, b, a}^{(4)}) &=& V_{2b} \cup \{x\}, \\
E(G_{a, b, b, a}^{(4)}) &=& \bigcup_{i = 1}^{a - 1} \{v_{2i - 1}, v_{2i}\} \cup \left\{ \{v_{j}, v_{k}\} : 2a - 1 \leq j < k \leq 2b \right\} \\
&\cup& \left\{ \{v_{\ell}, x\} : 1 \leq \ell \leq 2b \right\};
\end{eqnarray*}
see Figure \ref{fig:G_{a, b, b, a}^{(4)}}. 

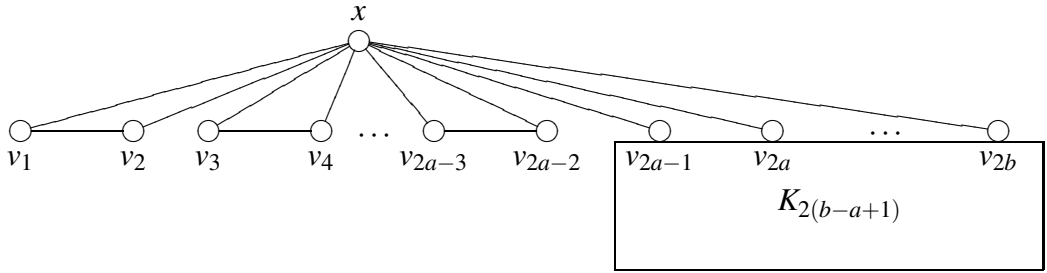
\begin{figure}[htbp]
\centering
\bigskip

\begin{xy}
	\ar@{} (0,0);(15, -16)  *++! U{v_{1}} *\cir<4pt>{} = "V1";
	\ar@{} (0,0);(40, -16)  *++! U{v_{3}} *\cir<4pt>{} = "V3";
	\ar@{} (0,0); (62, -13) *++!U{\cdots}
	\ar@{} (0,0);(70, -16)  *++! U{v_{2a-3}} *\cir<4pt>{} = "V2a-3";
	\ar@{-} "V1"; (30, -16) *++! U{v_{2}} *\cir<4pt>{} = "V2";
	\ar@{-} "V3"; (55, -16) *++! U{v_{4}} *\cir<4pt>{} = "V4";
	\ar@{-} "V2a-3"; (85, -16) *++! U{v_{2a - 2}} *\cir<4pt>{} = "V2(a-1)";
	\ar@{} "V1"; (60, -4) *++! D{x} *\cir<4pt>{} = "X";
	\ar@{-} "X"; "V1";
	\ar@{-} "X"; "V3";
	\ar@{-} "X"; "V2a-3";
	\ar@{-} "X"; "V2";
	\ar@{-} "X"; "V4";
	\ar@{-} "X"; "V2(a-1)";
	\ar@{-} "X";(100, -16)  *++! U{v_{2a - 1}} *\cir<4pt>{};
	\ar@{-} "X";(115, -16)  *++! U{v_{2a}} *\cir<4pt>{};
	\ar@{} "X"; (130, -12.8) *++!U{\cdots}
	\ar@{-} "X";(145, -16)  *++! U{v_{2b}} *\cir<4pt>{};
	\ar@{-} (94,-17.5); (151, -17.5);
	\ar@{-} (94,-17.5); (94, -34.5);
	\ar@{-} (94,-34.5); (151, -34.5);
	\ar@{-} (151,-17.5); (151, -34.5)
	\ar@{} (0,0);(124,-26) *{\text{$K_{2(b - a + 1)}$}};
\end{xy}

\bigskip

  \caption{The graph $G_{a, b, b, a}^{(4)}$}
  \label{fig:G_{a, b, b, a}^{(4)}}
\end{figure}

Let $H^{(4)} = (G_{a, b, b, a}^{(4)})_{V_{2b}}$ be the induced subgraph of $G_{a, b, b, a}^{(4)}$ on $V_{2b}$. 
Then we can regard that $G_{a, b, b, a}^{(4)}$ is the $\emptyset$-suspension of $H^{(4)}$ and 
$H^{(4)}$ is the disjoint union of $(a - 1)$ $K_{2}$ and $K_{2(b - a + 1)}$. 
By virtue of Lemma \ref{starK2n} and \ref{disconnected}, it follows that 
${\rm ind}$-${\rm match}(G_{a, b, b, a}^{(4)}) = \dim (G_{a, b, b, a}^{(4)}) = a$. 
Moreover, it is easy to see that ${\rm min}$-${\rm match}(G_{a, b, b, a}^{(4)}) = {\rm match}(G_{a, b, b, a}^{(4)}) = a - 1 + b - a + 1 = b$. \\

$\bullet \ ${\bf Case 5: $1 < a \leq b = c$ and $d > a. $} \\ 
Let $G_{a, b, b, d}^{(5)}$ be the graph such that 
\begin{eqnarray*}
V(G_{a, b, b, d}^{(5)}) &=& V_{2b} \cup \{x\} \cup \{y_{1}, y_{2}, \ldots, y_{d - a - 1}\}, \\
E(G_{a, b, b, d}^{(5)}) &=& \bigcup_{i = 1}^{a - 1} \{v_{2i - 1}, v_{2i}\} \cup \left\{ \{v_{j}, v_{k}\} : 2a - 1 \leq j < k \leq 2b \right\} \\
&\cup& \left\{ \{v_{2\ell - 1}, x\} : 1 \leq \ell \leq a \right\} \cup \left\{ \{v_{1}, y_{m}\} : 1 \leq m \leq d - a - 1 \right\};
\end{eqnarray*}
see Figure \ref{fig:G_{a, b, b, d}^{(5)}}. 

\begin{figure}[htbp]
\centering
\bigskip

\begin{xy}
	\ar@{} (0,0);(15, -16)  *++! R{v_{1}} *\cir<4pt>{} = "V1";
	\ar@{} (0,0);(40, -16)  *++! U{v_{3}} *\cir<4pt>{} = "V3";
	\ar@{} (0,0); (62, -13) *++!U{\cdots}
	\ar@{} (0,0);(70, -16)  *++! U{v_{2a-3}} *\cir<4pt>{} = "V2a-3";
	\ar@{-} "V1"; (30, -16) *++! U{v_{2}} *\cir<4pt>{} = "V2";
	\ar@{-} "V3"; (55, -16) *++! U{v_{4}} *\cir<4pt>{} = "V4";
	\ar@{-} "V2a-3"; (85, -16) *++! U{v_{2a - 2}} *\cir<4pt>{} = "V2(a-1)";
	\ar@{} "V1"; (60, -4) *++! D{x} *\cir<4pt>{} = "X";
	\ar@{-} "X"; "V1";
	\ar@{-} "X"; "V3";
	\ar@{-} "X"; "V2a-3";
	\ar@{-} "X";(100, -16)  *++! U{v_{2a - 1}} *\cir<4pt>{};
	\ar@{} "X";(115, -16)  *++! U{v_{2a}} *\cir<4pt>{};
	\ar@{} "X"; (130, -12.8) *++!U{\cdots}
	\ar@{} "X";(145, -16)  *++! U{v_{2b}} *\cir<4pt>{};
	\ar@{-} "V1";(0, -32) *++! U{y_{1}} *\cir<4pt>{};
	\ar@{-} "V1";(10, -32) *++! U{y_{2}} *\cir<4pt>{};
	\ar@{} "V1";(20, -28.4) *++!U{\cdots};
	\ar@{-} "V1";(30, -32) *++! U{y_{d - a - 1}} *\cir<4pt>{};
	\ar@{-} (94,-17.5); (151, -17.5);
	\ar@{-} (94,-17.5); (94, -34.5);
	\ar@{-} (94,-34.5); (151, -34.5);
	\ar@{-} (151,-17.5); (151, -34.5)
	\ar@{} (0,0);(124,-26) *{\text{$K_{2(b - a + 1)}$}};
\end{xy}


  \caption{The graph $G_{a, b, b, d}^{(5)}$}
  \label{fig:G_{a, b, b, d}^{(5)}}
\end{figure}
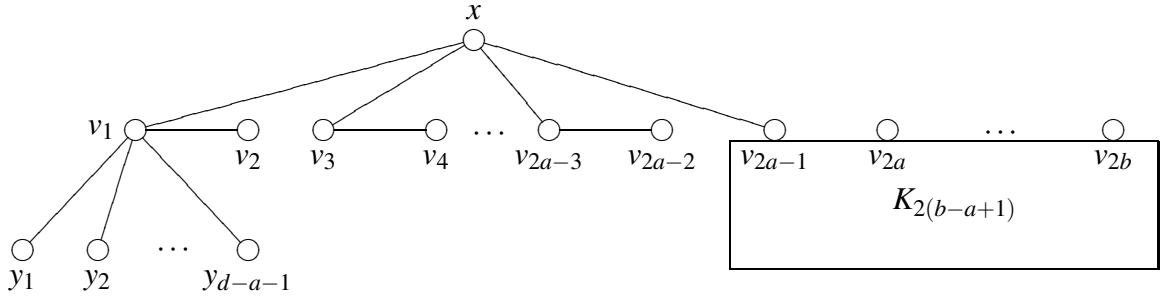

Let $H^{(5)} = (G_{a, b, b, d}^{(5)})_{V_{2b} \cup \{x\}}$ be the induced subgraph of $G_{a, b, b, d}^{(5)}$ on $V_{2b} \cup \{x\}$. 
It is easy to see that ${\rm ind}$-${\rm match}(H^{(5)}) = a$ and 
${\rm min}$-${\rm match}(H^{(5)}) = {\rm match}(H^{(5)}) = a - 1 + b - a + 1 = b$. 
Hence we have ${\rm ind}$-${\rm match}(G_{a, b, b, d}^{(5)}) = a$ and 
${\rm min}$-${\rm match}(G_{a, b, b, d}^{(5)}) = {\rm match}(G_{a, b, b, d}^{(5)}) = b$ by virtue of Lemma \ref{match}. 
A subset $\{v_{2}, v_{4}, \ldots, v_{2a - 2}, v_{2b}\} \cup \{x\} \cup \{ y_{1}, y_{2}, \ldots, y_{d - a - 1} \} \subset V(G_{a, b, b, d}^{(5)})$ 
is an independent set of $G_{a, b, b, d}^{(5)}$. 
Hence $\dim (G_{a, b, b, d}^{(5)}) \geq a + 1 + d - a - 1 = d$ by Lemma \ref{dim(G)}. 
Let $W \subset V(G_{a, b, b, d}^{(5)})$ with $|W| \geq d + 1$. 
Then $|W \cap V_{2b}| \geq a + 1$. 
Thus $W$ is not an independent set. 
Therefore $\dim (G_{a, b, b, d}^{(5)}) = d$.

$\bullet \ ${\bf Case 6: $1 < a \leq b < c \leq 2b$ and $d \geq 2(c - b) \geq a. $} \\ 
Let $Y_{d - 2(c - b)} = \{y_{1}, y_{2}, \ldots, y_{d - 2(c - b)}\}$.
Let $G_{a, b, c, d}^{(6)}$ be the graph such that 
\begin{eqnarray*}
V(G_{a, b, c, d}^{(6)}) &=& V_{2b} \cup \{x_{1}, x_{2}, \ldots, x_{2(c - b)}\} \cup Y_{d - 2(c - b)}, \\
E(G_{a, b, c, d}^{(6)}) &=& \bigcup_{i = 1}^{a - 1} \{v_{2i - 1}, v_{2i}\} \cup \left\{ \{v_{j}, v_{k}\} : 2a - 1 \leq j < k \leq 2b \right\} \\
&\cup& \left\{ \{v_{\ell}, x_{m}\} : 1 \leq \ell \leq 2b, 1 \leq m \leq 2(c - b) \right\} \\
&\cup& \left\{ \{v_{1}, y_{p}\} : 1 \leq p \leq d - 2(c - b) \right\};
\end{eqnarray*}
see Figure \ref{fig:G_{a, b, c, d}^{(6)}}. 

\begin{figure}[htbp]
\centering
\bigskip

\begin{xy}
	\ar@{} (0,0);(15, -16)  *++! R{v_{1}} *\cir<4pt>{} = "V1";
	\ar@{} (0,0);(40, -16)  *++! U{v_{3}} *\cir<4pt>{} = "V3";
	\ar@{} (0,0); (62, -13) *++!U{\cdots}
	\ar@{} (0,0);(70, -16)  *++! U{v_{2a-3}} *\cir<4pt>{} = "V2a-3";
	\ar@{-} "V1"; (30, -16) *++! U{v_{2}} *\cir<4pt>{} = "V2";
	\ar@{-} "V3"; (55, -16) *++! U{v_{4}} *\cir<4pt>{} = "V4";
	\ar@{-} "V2a-3"; (85, -16) *++! U{v_{2a - 2}} *\cir<4pt>{} = "V2a-2";
	\ar@{} "V1"; (50, 0) *++! D{x_{1}} *\cir<4pt>{} = "X1";
	\ar@{} "V1"; (70, 0) *++! D{x_{2}} *\cir<4pt>{} = "X2";
	\ar@{} "V1"; (78, 0) *++!L{\cdots}
	\ar@{} "V1"; (95, 0) *++! D{x_{2(c - b)}} *\cir<4pt>{} = "X2(c-b)";
	\ar@{-} "X1"; "V1";
	\ar@{-} "X1"; "V2";
	\ar@{-} "X1"; "V3";
	\ar@{-} "X1"; "V4";
	\ar@{-} "X1"; "V2a-3";
	\ar@{-} "X1"; "V2a-2";
	\ar@{-} "X1";(100, -16)  *++! U{v_{2a - 1}} *\cir<4pt>{} = "V2a-1";
	\ar@{-} "X1";(115, -16)  *++! U{v_{2a}} *\cir<4pt>{} = "V2a";
	\ar@{} "X1"; (130, -12.8) *++!U{\cdots}
	\ar@{-} "X1";(145, -16)  *++! U{v_{2b}} *\cir<4pt>{} = "V2b";
	\ar@{-} "X2"; "V1";
	\ar@{-} "X2"; "V2";
	\ar@{-} "X2"; "V3";
	\ar@{-} "X2"; "V4";
	\ar@{-} "X2"; "V2a-3";
	\ar@{-} "X2"; "V2a-2";
	\ar@{-} "X2"; "V2a-1";
	\ar@{-} "X2"; "V2a";
	\ar@{-} "X2"; "V2b";
	\ar@{-} "X2(c-b)"; "V1";
	\ar@{-} "X2(c-b)"; "V2";
	\ar@{-} "X2(c-b)"; "V3";
	\ar@{-} "X2(c-b)"; "V4";
	\ar@{-} "X2(c-b)"; "V2a-3";
	\ar@{-} "X2(c-b)"; "V2a-2";
	\ar@{-} "X2(c-b)"; "V2a-1";
	\ar@{-} "X2(c-b)"; "V2a";
	\ar@{-} "X2(c-b)"; "V2b";
	\ar@{-} "V1";(0, -32) *++! U{y_{1}} *\cir<4pt>{};
	\ar@{-} "V1";(10, -32) *++! U{y_{2}} *\cir<4pt>{};
	\ar@{} "V1";(20, -28.4) *++!U{\cdots};
	\ar@{-} "V1";(30, -32) *++! U{y_{d - 2(c - b)}} *\cir<4pt>{};
	\ar@{-} (94,-17.5); (151, -17.5);
	\ar@{-} (94,-17.5); (94, -34.5);
	\ar@{-} (94,-34.5); (151, -34.5);
	\ar@{-} (151,-17.5); (151, -34.5)
	\ar@{} (0,0);(124,-26) *{\text{$K_{2(b - a + 1)}$}};
\end{xy}


  \caption{The graph $G_{a, b, c, d}^{(6)}$}
  \label{fig:G_{a, b, c, d}^{(6)}}
\end{figure}
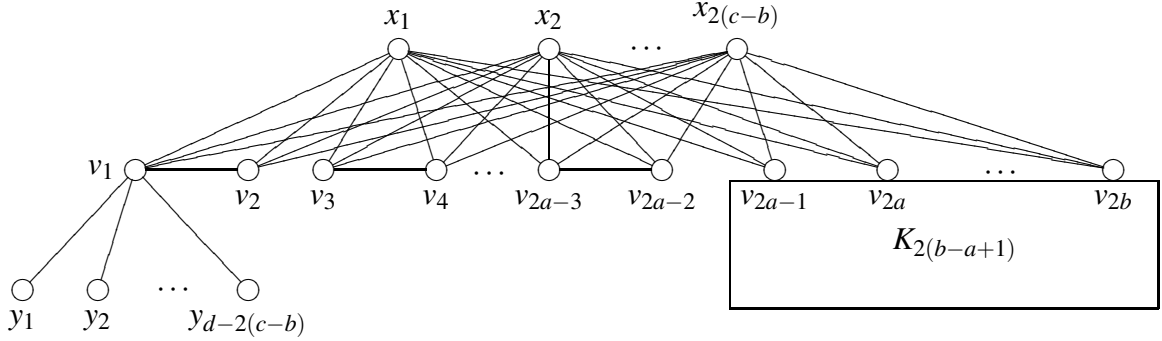

Let $S_{0} = Y_{d - 2(c - b)}$. 
For $1 \leq i \leq 2(c - b) - 1$, we define $S_{i} = Y_{d - 2(c - b)} \cup \{x_{1}, \ldots, x_{i}\}$. 
Then $S_{i}$ are independent sets and $|S_{i}| = d - 2(c - b) + i$ for $i = 0, 1, \ldots, 2(c - b) - 1$. 
Let $H_{0}^{(6)} = (G_{a, b, c, d}^{(6)})_{V_{2b} \cup Y_{d - 2(c - b)}}$ be the induced subgraph on $V_{2b} \cup Y_{d - 2(c - b)}$. 
We can regard that $H_{0}^{(6)}$ is the disjoint union of the star graph $G^{{\rm star}(v_{1})}_{d - 2(c - b) + 1}$, $(a - 2)K_{2}$ and 
$K_{2(b - a + 1)}$. 
Hence we have ${\rm ind}$-${\rm match}(H_{0}^{(6)}) = a$ and $\dim (H_{0}^{(6)}) = d - 2(c - b) + 1 + a - 2 + 1 = d - 2(c - b) + a$ 
by Lemma \ref{starK2n}. 

For $1 \leq j \leq 2(c - b)$, we define $H_{j}^{(6)} = (H_{j - 1}^{(6)})^{S_{j - 1}}$ inductively. 
Then, Lemma \ref{S-suspension} says that ${\rm ind}$-${\rm match}(H_{j}^{(6)}) = a$ for all $j = 0, 1, \ldots, 2(c - b)$ and 
\begin{center}
$\dim (H_{j}^{(6)}) = \begin{cases} \displaystyle d - 2(c - b) + a & \text{$(0 \leq j \leq a)$}, \\ \displaystyle d - 2(c - b) + j & \text{($a + 1 \leq j \leq 2(c - b))$}. \end{cases}$ 
\end{center}
Since $G_{a, b, c, d}^{(6)} = H_{2(c - b)}^{(6)}$, one has ${\rm ind}$-${\rm match}(G_{a, b, c, d}^{(6)}) = a$ and 
$\dim (G_{a, b, c, d}^{(6)}) = d$ from Lemma \ref{S-suspension}. 

Note that $\bigcup_{i = 1}^{b} \{v_{2i - 1}, v_{2i}\} \subset E(G_{a, b, c, d}^{(6)})$ is a maximal matching of $G_{a, b, c, d}^{(6)}$. 
Hence one has ${\rm min}$-${\rm match}(G_{a, b, c, d}^{(6)}) \leq b$. 
Moreover, by virtue of Lemma \ref{starK2n} and \ref{inducedsubgraph}, it follows that  $b = {\rm min}$-${\rm match}((G_{a, b, c, d}^{(6)})_{V_{2b}}) \leq {\rm min}$-${\rm match}(G_{a, b, c, d}^{(6)})$. 
Therefore ${\rm min}$-${\rm match}(G_{a, b, c, d}^{(6)})) = b$.
Since $\bigcup_{i = 1}^{2(c - b)} \{v_{i}, x_{i}\} \cup \bigcup_{j = 1}^{2b - c} \{v_{2(c - b) + 2j - 1}, v_{2(c - b) + 2j}\} $ is a matching, 
hence we have ${\rm match}(G_{a, b, c, d}^{(6)}) \geq 2(c - b) + 2b - c = c$. 
In addition, since $|V(G_{a, b, c, 2(c - b) + 1}^{(6)}))| = 2c + 1$, we also have 
${\rm match}(G_{a, b, c, d}^{(6)}) = {\rm match}(G_{a, b, c, 2(c - b) + 1}^{(6)}) \leq c$ by Lemma \ref{match}. 
Thus one has ${\rm match}(G_{a, b, c, d}^{(6)}) = c$. 

$\bullet \ ${\bf Case 7: $1 < a \leq b < c \leq 2b$ and $d \geq a > 2(c - b). $} \\ 
Let $Y_{d - a} = \{y_{1}, y_{2}, \ldots, y_{d - a}\}$.
Let $G_{a, b, c, d}^{(7)}$ be the graph such that 
\begin{eqnarray*}
V(G_{a, b, c, d}^{(7)}) &=& V_{2b} \cup \{x_{1}, x_{2}, \ldots, x_{2(c - b)}\} \cup Y_{d - a}, \\
E(G_{a, b, c, d}^{(7)}) &=& \bigcup_{i = 1}^{a - 1} \{v_{2i - 1}, v_{2i}\} \cup \left\{ \{v_{j}, v_{k}\} : 2a - 1 \leq j < k \leq 2b \right\} \\
&\cup& \left\{ \{v_{\ell}, x_{m}\} : 1 \leq \ell \leq 2b, 1 \leq m \leq 2(c - b) \right\} \\
&\cup& \left\{ \{v_{1}, y_{p}\} : 1 \leq p \leq d - a \right\};
\end{eqnarray*}
see Figure \ref{fig:G_{a, b, c, d}^{(7)}}. 

\begin{figure}[htbp]
\centering
\bigskip

\begin{xy}
	\ar@{} (0,0);(15, -16)  *++! R{v_{1}} *\cir<4pt>{} = "V1";
	\ar@{} (0,0);(40, -16)  *++! U{v_{3}} *\cir<4pt>{} = "V3";
	\ar@{} (0,0); (62, -13) *++!U{\cdots}
	\ar@{} (0,0);(70, -16)  *++! U{v_{2a-3}} *\cir<4pt>{} = "V2a-3";
	\ar@{-} "V1"; (30, -16) *++! U{v_{2}} *\cir<4pt>{} = "V2";
	\ar@{-} "V3"; (55, -16) *++! U{v_{4}} *\cir<4pt>{} = "V4";
	\ar@{-} "V2a-3"; (85, -16) *++! U{v_{2a - 2}} *\cir<4pt>{} = "V2a-2";
	\ar@{} "V1"; (50, 0) *++! D{x_{1}} *\cir<4pt>{} = "X1";
	\ar@{} "V1"; (70, 0) *++! D{x_{2}} *\cir<4pt>{} = "X2";
	\ar@{} "V1"; (78, 0) *++!L{\cdots}
	\ar@{} "V1"; (95, 0) *++! D{x_{2(c - b)}} *\cir<4pt>{} = "X2(c-b)";
	\ar@{-} "X1"; "V1";
	\ar@{-} "X1"; "V2";
	\ar@{-} "X1"; "V3";
	\ar@{-} "X1"; "V4";
	\ar@{-} "X1"; "V2a-3";
	\ar@{-} "X1"; "V2a-2";
	\ar@{-} "X1";(100, -16)  *++! U{v_{2a - 1}} *\cir<4pt>{} = "V2a-1";
	\ar@{-} "X1";(115, -16)  *++! U{v_{2a}} *\cir<4pt>{} = "V2a";
	\ar@{} "X1"; (130, -12.8) *++!U{\cdots}
	\ar@{-} "X1";(145, -16)  *++! U{v_{2b}} *\cir<4pt>{} = "V2b";
	\ar@{-} "X2"; "V1";
	\ar@{-} "X2"; "V2";
	\ar@{-} "X2"; "V3";
	\ar@{-} "X2"; "V4";
	\ar@{-} "X2"; "V2a-3";
	\ar@{-} "X2"; "V2a-2";
	\ar@{-} "X2"; "V2a-1";
	\ar@{-} "X2"; "V2a";
	\ar@{-} "X2"; "V2b";
	\ar@{-} "X2(c-b)"; "V1";
	\ar@{-} "X2(c-b)"; "V2";
	\ar@{-} "X2(c-b)"; "V3";
	\ar@{-} "X2(c-b)"; "V4";
	\ar@{-} "X2(c-b)"; "V2a-3";
	\ar@{-} "X2(c-b)"; "V2a-2";
	\ar@{-} "X2(c-b)"; "V2a-1";
	\ar@{-} "X2(c-b)"; "V2a";
	\ar@{-} "X2(c-b)"; "V2b";
	\ar@{-} "V1";(0, -32) *++! U{y_{1}} *\cir<4pt>{};
	\ar@{-} "V1";(10, -32) *++! U{y_{2}} *\cir<4pt>{};
	\ar@{} "V1";(20, -28.4) *++!U{\cdots};
	\ar@{-} "V1";(30, -32) *++! U{y_{d - a}} *\cir<4pt>{};
	\ar@{-} (94,-17.5); (151, -17.5);
	\ar@{-} (94,-17.5); (94, -34.5);
	\ar@{-} (94,-34.5); (151, -34.5);
	\ar@{-} (151,-17.5); (151, -34.5)
	\ar@{} (0,0);(124,-26) *{\text{$K_{2(b - a + 1)}$}};
\end{xy}

\bigskip

  \caption{The graph $G_{a, b, c, d}^{(7)}$}
  \label{fig:G_{a, b, c, d}^{(7)}}
\end{figure}
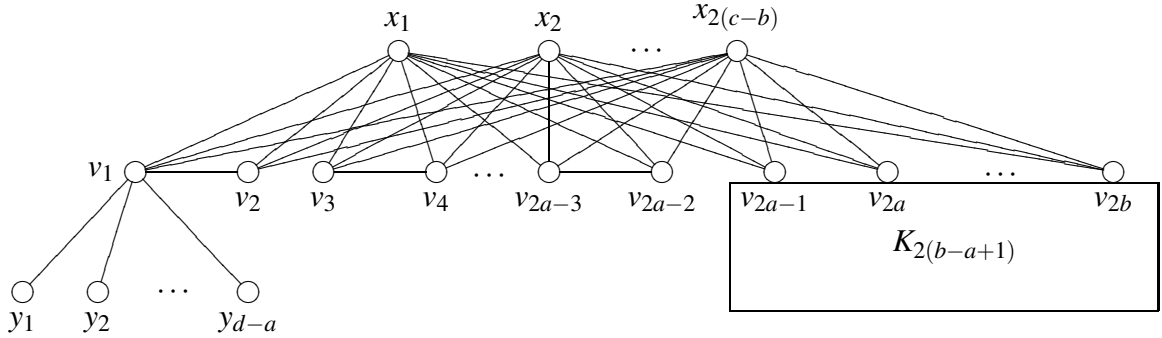

Applying the same argument as in the Case 6, we have that 
${\rm min}$-${\rm match}(G_{a, b, c, d}^{(7)}) = b$ and ${\rm match}(G_{a, b, c, d}^{(7)}) = c$. 

Let $S_{0} = Y_{d - a}$. 
For $1 \leq i \leq 2(c - b) - 1$, we define $S_{i} = Y_{d - a} \cup \{x_{1}, \ldots, x_{i}\}$. 
Then $S_{i}$ are independent sets and $|S_{i}| = d - a + i$ for $i = 0, 1, \ldots, 2(c - b) - 1$. 
In particular, $|S_{i}| < d$ since $a > 2(c - b)$. 
Let $H_{0}^{(7)} = (G_{a, b, c, d}^{(7)})_{V_{2b} \cup Y_{d - a}}$ be the induced subgraph on $V_{2b} \cup Y_{d - a}$. 
We can regard that $H_{0}^{(7)}$ is the disjoint union of the star graph $G^{{\rm star}(v_{1})}_{d - a + 1}$, $(a - 2)K_{2}$ and 
$K_{2(b - a + 1)}$. 
Hence ${\rm ind}$-${\rm match}(H_{0}^{(7)}) = a$ and $\dim (H_{0}^{(7)}) = d - a + 1 + a - 2 + 1 = d$ 
by Lemma \ref{starK2n}. 

For $1 \leq j \leq 2(c - b)$, we define $H_{j}^{(7)} = (H_{j - 1}^{(7)})^{S_{j - 1}}$ inductively. 
Then, Lemma \ref{S-suspension} says that ${\rm ind}$-${\rm match}(H_{j}^{(7)}) = a$ and $\dim (H_{j}^{(7)}) = d$ 
for all $j = 0, 1, \ldots, 2(c - b)$.   
Since $G_{a, b, c, d}^{(7)} = H_{2(c - b)}^{(7)}$, it follows that ${\rm ind}$-${\rm match}(G_{a, b, c, d}^{(7)}) = a$ and 
$\dim (G_{a, b, c, d}^{(7)}) = d$. 
\end{proof}




\begin{Remark}\normalfont
Let $a, b, m, n$ be non-negative integers with $m \leq n$ and $1 \leq n$. 
Let $G_{a, b, m, n}$ be the graph which appears in \cite[p. 176]{HHKT}. 
\cite[Theorem 2.3]{HHKT} says that ${\rm ind}$-${\rm match}(G_{a, b, m, n}) = a + b + 1$, 
${\rm min}$-${\rm match}(G_{a, b, m, n}) = a + b + n$ and ${\rm match}(G_{a, b, m, n}) = 2a + b + n + m$. 
Note that
\begin{center}
$\dim (G_{a, b, m, n}) = \begin{cases} \displaystyle 3a + 2b + 2m & \text{$(m = n)$}, \\ \displaystyle 3a + 2b + 2m + 1 & \text{$(m < n)$}. \end{cases}$ 
\end{center}
In particular, we have
\begin{itemize}
	\item Suppose that ${\rm ind}$-${\rm match}(G_{a, b, m, n}) = 1$. 
	Then $a = b = 0$. Hence
	\begin{center}
	$\dim (G_{a, b, m, n}) = \begin{cases} \displaystyle 2\{{\rm match}(G_{a, b, m, n}) - {\rm min}$-${\rm match}(G_{a, b, m, n})\} & \text{$(m = n)$}, \\ \displaystyle 2\{{\rm match}(G_{a, b, m, n}) - {\rm min}$-${\rm match}(G_{a, b, m, n})\} + 1 & \text{$	(m < n)$}. \end{cases}$ 
	\end{center}
	\item Suppose that ${\rm min}$-${\rm match}(G_{a, b, m, n}) = {\rm match}(G_{a, b, m, n})$. 
	Then $a = m = 0$. Hence $\dim (G_{a, b, m, n}) = 2{\rm ind}$-${\rm match}(G_{a, b, m, n}) - 1$. 
	\item Suppose that ${\rm match}(G_{a, b, m, n}) = 2{\rm min}$-${\rm match}(G_{a, b, m, n})$. 
	Then $b = 0$ and $m = n$. 
	Hence $\dim (G_{a, b, m, n}) = 2{\rm min}$-${\rm match}(G_{a, b, m, n}) + {\rm ind}$-${\rm match}(G_{a, b, m, n}) - 1$. 
	\item Suppose that ${\rm min}$-${\rm match}(G_{a, b, m, n}) < {\rm match}(G_{a, b, m, n}) < 2{\rm min}$-${\rm match}(G_{a, b, m, n})$.
	Then $0 < a + m$ and $m < b + n$. 
	Hence $\dim (G_{a, b, m, n}) \geq 2{\rm ind}$-${\rm match}(G_{a, b, m, n})$.
\end{itemize}
\end{Remark} 

\begin{Example}\normalfont
\begin{enumerate}
	\item Let $a = 1, b = c = 2$ and $d = 3$. Then \\
	$\ $ \\
	\begin{xy}
		\ar@{} (0,0);(45,-15) *{\text{$G_{1, 2, 2, 3}^{(1)} =$}};
		\ar@{} (0,0);(75, -6) *\cir<4pt>{} = "C"
		\ar@{-} "C";(65, 0)  *\cir<4pt>{} = "E";
		\ar@{-} "C";(85, 0)  *\cir<4pt>{} = "F";
		\ar@{-} "C";(75, -24) *\cir<4pt>{} = "D";
		\ar@{-} "C";(65, -15) *\cir<4pt>{} = "G";
		\ar@{-} "C";(85, -15) *\cir<4pt>{} = "H";
		\ar@{-} "G";"H";
		\ar@{-} "D";"H";
		\ar@{-} "G";"D";
	\end{xy}
	
	$\ $ \\
	\item Let $a = 1, b = 2, c = 3$ and $d = 2$. Then 
	$\ $ \\
	\begin{xy}
		\ar@{} (0,0);(45,-12.5) *{\text{$G_{1, 2, 3, 2}^{(2)} =$}};
		\ar@{} (0,0);(70, -5) *\cir<4pt>{} = "A"
		\ar@{-} "A";(70, -20) *\cir<4pt>{} = "E";
		\ar@{-} "A";(60, -12.5) *\cir<4pt>{} = "F";
		\ar@{-} "A";(85, -5) *\cir<4pt>{} = "B";
		\ar@{-} "A";(85, -20) *\cir<4pt>{} = "D";
		\ar@{-} "B";(95, -12.5) *\cir<4pt>{} = "C";
		\ar@{-} "B";"D";
		\ar@{-} "B";"E";
		\ar@{-} "C";"D";
		\ar@{-} "D";"E";
		\ar@{-} "E";"F";
	\end{xy}
	
	$\ $ \\
	\item Let $a = 1, b = 3, c = 4$ and $d = 5$. Then \\
	$\ $ \\
	\begin{xy}
		\ar@{} (0,0);(25,-12.5) *{\text{$G_{1, 3, 4, 5}^{(3)} =$}};
		\ar@{} (0,0);(65, -5) *\cir<4pt>{} = "A"
		\ar@{} "A";(65, -20) *\cir<4pt>{} = "E";
		\ar@{-} "A";(55, -12.5) *\cir<4pt>{} = "F";
		\ar@{-} "A";(80, -5) *\cir<4pt>{} = "B";
		\ar@{} "A";(80, -20) *\cir<4pt>{} = "D";
		\ar@{-} "B";(90, -12.5) *\cir<4pt>{} = "C";
		\ar@{-} "A";(40, -12.5) *\cir<4pt>{} = "G";
		\ar@{-} "C";(100, -12.5) *\cir<4pt>{} = "H";
		\ar@{-} "H";"B";
		\ar@{-} "H";"D";
		\ar@{-} "G";"E";
		\ar@{-} "G";"F";
		\ar@{-} "C";"D";
		\ar@{-} "D";"E";
		\ar@{-} "E";"F";
		\ar@{} (0,0);(72.5,-12.5) *{\text{$K_{6}$}};
		\ar@{-} "A";(75, 5) *\cir<4pt>{};
		\ar@{-} "A";(55, 5) *\cir<4pt>{};
		\ar@{-} "A";(65, 5) *\cir<4pt>{};
	\end{xy}
	
	$\ $ \\
	\item Let $a = 2, b = c = 3$ and $d = 2$. Then \\
	$\ $ \\
	\begin{xy}
		\ar@{} (0,0);(25,-5) *{\text{$G_{2, 3, 3, 2}^{(4)} =$}};
		\ar@{} (0,0);(70, -5) *\cir<4pt>{} = "A"
		\ar@{-} "A";(70, -20) *\cir<4pt>{} = "E";
		\ar@{-} "A";(85, -5) *\cir<4pt>{} = "B";
		\ar@{-} "A";(85, -20) *\cir<4pt>{} = "D";
		\ar@{-} "B";"D";
		\ar@{-} "B";"E";
		\ar@{-} "D";"E";
		\ar@{} (0,0);(40, -5) *\cir<4pt>{} = "C";
		\ar@{-} "C";(55, -5) *\cir<4pt>{} = "F";
		\ar@{} (0,0);(62.5, 10) *\cir<4pt>{} = "G";
		\ar@{-} "A";"G";
		\ar@{-} "B";"G";
		\ar@{-} "C";"G";
		\ar@{-} "D";"G";
		\ar@{-} "E";"G";
		\ar@{-} "F";"G";
	\end{xy}
	
	$\ $ \\
	\item Let $a = 2, b = c = 3$ and $d = 4$. Then \\
	$\ $ \\
	\begin{xy}
		\ar@{} (0,0);(25,-5) *{\text{$G_{2, 3, 3, 4}^{(5)} =$}};
		\ar@{} (0,0);(70, -5) *\cir<4pt>{} = "A"
		\ar@{-} "A";(70, -20) *\cir<4pt>{} = "E";
		\ar@{-} "A";(85, -5) *\cir<4pt>{} = "B";
		\ar@{-} "A";(85, -20) *\cir<4pt>{} = "D";
		\ar@{-} "B";"D";
		\ar@{-} "B";"E";
		\ar@{-} "D";"E";
		\ar@{} (0,0);(40, -5) *\cir<4pt>{} = "C";
		\ar@{-} "C";(55, -5) *\cir<4pt>{} = "F";
		\ar@{} (0,0);(62.5, 10) *\cir<4pt>{} = "G";
		\ar@{-} "A";"G";
		\ar@{} "B";"G";
		\ar@{-} "C";"G";
		\ar@{} "D";"G";
		\ar@{} "E";"G";
		\ar@{} "F";"G";
		\ar@{-} "C";(40, -20) *\cir<4pt>{};
	\end{xy}
	
	$\ $ \\
	\item Let $a = 3, b = 4, c = 6$ and $d = 5$. Then \\
	$\ $ \\
	\begin{xy}
		\ar@{} (0,0);(15,-12.5) *{\text{$G_{3, 4, 6, 5}^{(6)} =$}};
		\ar@{} (0,0);(100, -5) *\cir<4pt>{} = "A"
		\ar@{-} "A";(100, -20) *\cir<4pt>{} = "E";
		\ar@{-} "A";(115, -5) *\cir<4pt>{} = "B";
		\ar@{-} "A";(115, -20) *\cir<4pt>{} = "D";
		\ar@{-} "B";"D";
		\ar@{-} "B";"E";
		\ar@{-} "D";"E";
		\ar@{} (0,0);(60, -5) *\cir<4pt>{} = "C";
		\ar@{-} "C";(75, -5) *\cir<4pt>{} = "F";
		\ar@{} (0,0);(62, 10) *\cir<4pt>{} = "G";
		\ar@{-} "A";"G";
		\ar@{-} "B";"G";
		\ar@{-} "C";"G";
		\ar@{-} "D";"G";
		\ar@{-} "E";"G";
		\ar@{-} "F";"G";
		\ar@{-} "G";(30, -5) *\cir<4pt>{} = "I";
		\ar@{-} "I";(45, -5) *\cir<4pt>{} = "J";
		\ar@{-} "J";"G";
		\ar@{} (0,0);(92, 10) *\cir<4pt>{} = "H";
		\ar@{-} "A";"H";
		\ar@{-} "B";"H";
		\ar@{-} "C";"H";
		\ar@{-} "D";"H";
		\ar@{-} "E";"H";
		\ar@{-} "F";"H";
		\ar@{-} "I";"H";
		\ar@{-} "J";"H";
		\ar@{} (0,0);(62, -35) *\cir<4pt>{} = "K";
		\ar@{-} "A";"K";
		\ar@{-} "B";"K";
		\ar@{-} "C";"K";
		\ar@{-} "D";"K";
		\ar@{-} "E";"K";
		\ar@{-} "F";"K";
		\ar@{-} "I";"K";
		\ar@{-} "J";"K";
		\ar@{} (0,0);(92, -35) *\cir<4pt>{} = "L";
		\ar@{-} "A";"L";
		\ar@{-} "B";"L";
		\ar@{-} "C";"L";
		\ar@{-} "D";"L";
		\ar@{-} "E";"L";
		\ar@{-} "F";"L";
		\ar@{-} "I";"L";
		\ar@{-} "J";"L";
		\ar@{-} "I";(30, -20) *\cir<4pt>{};
	\end{xy}

	$\ $ \\
	\item Let $a = 3, b = 4, c = 5$ and $d = 4$. Then \\
	$\ $ \\
	\begin{xy}
		\ar@{} (0,0);(15,-5) *{\text{$G_{3, 4, 5, 4}^{(7)} =$}};
		\ar@{} (0,0);(100, -5) *\cir<4pt>{} = "A"
		\ar@{-} "A";(100, -20) *\cir<4pt>{} = "E";
		\ar@{-} "A";(115, -5) *\cir<4pt>{} = "B";
		\ar@{-} "A";(115, -20) *\cir<4pt>{} = "D";
		\ar@{-} "B";"D";
		\ar@{-} "B";"E";
		\ar@{-} "D";"E";
		\ar@{} (0,0);(60, -5) *\cir<4pt>{} = "C";
		\ar@{-} "C";(75, -5) *\cir<4pt>{} = "F";
		\ar@{} (0,0);(62, 10) *\cir<4pt>{} = "G";
		\ar@{-} "A";"G";
		\ar@{-} "B";"G";
		\ar@{-} "C";"G";
		\ar@{-} "D";"G";
		\ar@{-} "E";"G";
		\ar@{-} "F";"G";
		\ar@{-} "G";(30, -5) *\cir<4pt>{} = "I";
		\ar@{-} "I";(45, -5) *\cir<4pt>{} = "J";
		\ar@{-} "J";"G";
		\ar@{} (0,0);(92, 10) *\cir<4pt>{} = "H";
		\ar@{-} "A";"H";
		\ar@{-} "B";"H";
		\ar@{-} "C";"H";
		\ar@{-} "D";"H";
		\ar@{-} "E";"H";
		\ar@{-} "F";"H";
		\ar@{-} "I";"H";
		\ar@{-} "J";"H";
		\ar@{-} "I";(30, -20) *\cir<4pt>{};
	\end{xy}

\end{enumerate}
\end{Example}

\bigskip

\noindent
{\bf Acknowledgment.}
The second author was partially supported by JSPS KAKENHI 17K14165.

\bigskip

\end{document}